\documentclass{amsart}
\usepackage{amsmath,amsthm}
\usepackage {latexsym}
\usepackage{amssymb}

\newcommand{\beal}{\begin{align}}
\newcommand{\enal}{\end{align}}
\newcommand{\bealn}{\begin{align*}}
\newcommand{\enaln}{\end{align*}}
\newcommand{\bear}{\begin{eqnarray}}
\newcommand{\eear}{\end{eqnarray}}
\newcommand{\beeq}{\begin{equation}}
\newcommand{\eneq}{\end{equation}}

\newcommand{\eps}{{\varepsilon}}
\newcommand{\R}{{\mathbb R}}

\newcommand{\la}{\langle}
\newcommand{\ra}{\rangle}

\def\bm{\left[ \begin{array}{cc}}
\def\endm{\end{array}\right]}

\def\eps{\varepsilon}

\def\bm{\left[\begin{matrix} }
\def\endm{\end{matrix}\right]}
\def\la{\langle}
\def\ra{\rangle}

\def\les{\lesssim}

\def\R{{\mathbb R}}

\newtheorem{theorem}{Theorem}
\newtheorem{lemma}[theorem]{Lemma}

\newtheorem{prop}[theorem]{Proposition}

\theoremstyle{remark}

\renewcommand{\hat}{\widehat}
\renewcommand{\epsilon}{\eps}
\renewcommand{\tilde}{\widetilde}
\numberwithin{equation}{section}
\numberwithin{theorem}{section}

\begin{document}

\title{On stability of the catenoid under vanishing mean curvature flow on Minkowski space.}

\author[J. Krieger]{Joachim Krieger}
\address{(JK) Dept. of Mathematics, Station 8, CH-1015 Lausanne, Switzerland.}
\email{joachim.krieger@epfl.ch}
\urladdr{http://pde.epfl.ch}

\author[H. Lindblad]{Hans Lindblad}
\address{(HL) Dept. of Mathematics, Johns Hopkins University}
\email{lindblad@math.jhu.edu}
\urladdr{http://www.math.jhu.edu/~lindblad/}

\begin{abstract} We establish basic local existence as well as a stability result concerning small perturbations of the Catenoid minimal surface in $\R^3$ under hyperbolic vanishing mean curvature flow.
\end{abstract}

\maketitle

\section{Introduction}

The minimal surface equation in Riemannian geometry has a natural analogue on a Lorentzian background. In particular, working on a Minkowski background $\R^{2+1} = \{(t, x)| x\in \R^2\}$ equipped with the standard metric $dg = dt^2 - \sum_{i=1,2}dx_i^2$ and considering surfaces $S$ which for fixed $t$ are graphs of functions $\phi(t, x)$ over $\R^2$, we find the equation
\begin{equation}\label{eq:HypMin}
\frac{\partial}{\partial t}\big[\frac{\phi_t}{\sqrt{1+|\nabla_x \phi|^2 - \phi_t^2}}\big] - \sum_{i=1,2}\frac{\partial}{\partial x_i}\big[\frac{\phi_{x_i}}{\sqrt{1+|\nabla_x \phi|^2 - \phi_t^2}}\big]  = 0
\end{equation}
We note that this equation appears in string theory \cite{Ho}.
\\
As of this point in time, there appears to be  no general theory for dealing with quasilinear problems of this nature, and even perturbative questions appear highly challenging. The most basic of these is to study the stability of the trivial solution $\phi = 0$ describing a plane, which was effected in \cite{Bren}, \cite{Lin}. We are not aware of works studying the stability under \eqref{eq:HypMin} of other minimal surfaces in $\R^{3}$.
Here we would like to initiate the study of the (in)stability of another natural static solution (i. e. minimal surface in the Riemannian sense) of \eqref{eq:HypMin}, the {\it{Catenoid}}.
This is the solution given by the graph of 
\begin{equation}\label{eq:cat}
\phi(t, r) = Q(r) : = \log (r+\sqrt{r^2 - 1}),\,r = |x|\geq 1
\end{equation}
In order to obtain some basic idea of what to expect, it is natural to look at elliptic and parabolic analogues of \eqref{eq:HypMin}, and in particular, the question of stability of this solution in the variational sense. Here it has been well-known since the 1980's \cite{SchF} that the Catenoid (as well as all other non-planar minimal surfaces) are {\it{unstable}}, and thus at least for the parabolic analogue of \eqref{eq:HypMin} generic perturbations of \eqref{eq:cat} are expected to lead to singularity formation (via neck pinching) and the formation of two planes. It is not too far-fetched to surmise that the solution \eqref{eq:cat} is also unstable for the flow \eqref{eq:HypMin}, although we are far from having an argument for this.
In the following sections, we aim to settle some very basic questions concerning \eqref{eq:HypMin}: first, the most basic issue is that of understanding local well-posedness for arbitrary (sufficiently smooth) perturbations of \eqref{eq:cat}. Second, in order to better understand potential singularity formation for generic perturbations of \eqref{eq:cat}, we establish a result on stability of  \eqref{eq:cat} for certain generic radial perturbations which are supported far away from the collar $r = 1$, as long as the resulting deformation stays away from the collar (here we take advantage of the Huyghen's principle). This result implies in particular that for these solutions, a singularity can only set in once the 'collar starts to move'.

\section{Local existence} Instead of working with an explicit graph representation which yields the description \eqref{eq:HypMin}, one may also work with an implicit description. Then the minimal surface equation for a hypersurface
$$
\Psi=0
$$ in
Minkowski space-time is given by
$$
\nabla_\alpha \bold{N}^\alpha\Big|_{\Psi=0}=0,\qquad \bold{N}^\alpha=\nabla^\alpha
\Psi/|\nabla\Psi|,\qquad \nabla^\alpha=m^{\alpha\beta}\nabla_\beta
$$
and $\nabla_\alpha=\partial_\alpha$, $|W|=\sqrt{m(W,W)}$, where $m$ is the Minkowski metric
$diag(-1,1,1,1)$. This can also be written
$$
\big(|\nabla\Psi|^2 m^{\alpha\beta} -\nabla^\alpha \Psi\nabla^\beta\Psi\big) \nabla_\alpha\nabla_\beta\Psi=0
$$

\subsection{Hyperbolicity}
We write a four vector $X=(X^0,X^\prime)$, where $X^\prime$ is a three vector.
For a three vector let $|X^\prime|$ denote the Euclidean distance. Set
$$
g^{\alpha\beta}(X)=|\hat{X}|^2 m^{\alpha\beta} -\hat{X}^\alpha \hat{X}^\beta, \qquad \text{where}\quad
\hat{X}=X/|X^\prime|
$$
With repeated upper and lower Greek indices $\alpha,\beta,\gamma,\delta,...$ being summed over $0,1,2,3$ and repeated Latin indices $i,j,...$ being summed over $1,2,3$ only we have:
\begin{lemma} We have
\begin{equation}
g^{\alpha\beta}(X)\xi_\alpha\xi_\beta=-(\xi_0+T^j\xi_j)^2+\gamma^{ij}\xi_i\xi_j
\end{equation}
where
\begin{equation}
T^j=\hat{X}^0 \hat{X}^j,\qquad\text{and}\qquad
\gamma^{ij}=(1-(\hat{X}^0)^2)\big(\delta^{ij}-\hat{X}^i\hat{X}^j \big).
\end{equation}
and $\hat{X}=X/|X^\prime|$, where $|X^\prime|=\sqrt{X_1^2+X_2^2+X_3^2}$.
We have
\begin{equation}
(1-|T|^2)\big(|n|^2-(T^k n_k/|T|)^2\big) \leq \gamma^{ij} n_i n_j\leq (|n|-|T^k n_k|)^2
\end{equation}
\end{lemma}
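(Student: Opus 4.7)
My plan is to prove all three statements by direct algebraic expansion, exploiting the key observation that $\hat X = X/|X'|$ is normalized so that its \emph{spatial} part has Euclidean length one: $\sum_i (\hat X^i)^2 = 1$. Two immediate consequences drive everything: $m(\hat X, \hat X) = -(\hat X^0)^2 + 1$ and $|T|^2 = (\hat X^0)^2 \sum_j (\hat X^j)^2 = (\hat X^0)^2$.

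For the quadratic identity, I would expand the left side as
\[
g^{\alpha\beta}(X)\xi_\alpha\xi_\beta = (1 - (\hat X^0)^2)\bigl(-\xi_0^2 + |\xi'|^2\bigr) - \bigl(\hat X^0\xi_0 + \hat X^k \xi_k\bigr)^2,
\]
and the right side, after substituting the definitions of $T^j$ and $\gamma^{ij}$, as
\[
-\xi_0^2 - 2\hat X^0 \xi_0 \hat X^k\xi_k - (\hat X^0)^2(\hat X^k\xi_k)^2 + (1-(\hat X^0)^2)\bigl(|\xi'|^2 - (\hat X^i\xi_i)^2\bigr),
\]
and compare coefficients. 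The only nonobvious cancellation is that the two occurrences of $(\hat X^k\xi_k)^2$ combine with coefficient $(\hat X^0)^2 + (1 - (\hat X^0)^2) = 1$, matching the single $(\hat X^k\xi_k)^2$ term produced by expanding the square on the left.

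For the inequalities, I would first observe that $T^k n_k/|T| = \sign(\hat X^0)\,\hat X^k n_k$, so $(T^k n_k/|T|)^2 = (\hat X^k n_k)^2$, and $1 - |T|^2 = 1 - (\hat X^0)^2$. Substituting, the left-hand quantity equals $(1-(\hat X^0)^2)(|n|^2 - (\hat X^k n_k)^2) = \gamma^{ij}n_i n_j$, so the lower bound is in fact an equality. For the upper bound, setting $u = |\hat X^0|$ and $v = |\hat X^k n_k|/|n|$, both in $[0,1]$ (the latter by Cauchy--Schwarz, using $\sum(\hat X^i)^2 = 1$), the inequality reduces to
\[
(1-u^2)(1-v^2) \le (1-uv)^2,
\]
which is equivalent to the trivial identity $(1-uv)^2 - (1-u^2)(1-v^2) = (u-v)^2 \ge 0$.

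The argument is purely algebraic and presents no serious obstacle beyond careful bookkeeping of coefficients. The one conceptual point worth highlighting is that the specifically Euclidean (rather than Minkowski) normalization in $\hat X = X/|X'|$ is what makes $|T|^2 = (\hat X^0)^2$ and is responsible for the lower bound being saturated; no hyperbolicity assumption (i.e.\ no sign condition on $1 - (\hat X^0)^2$) enters the proof.
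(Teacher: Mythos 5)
Your proof is correct and essentially identical to the paper's: both complete the square in $\xi_0$ using $|\hat X|^2 = 1-(\hat X^0)^2$, and both reduce the two inequalities to the factorization $(1-uv)^2 - (1-u^2)(1-v^2) = (u-v)^2$ after noting $|T| = |\hat X^0|$ and $(T^k n_k/|T|)^2 = (\hat X^k n_k)^2$. One small slip worth flagging: $u = |\hat X^0| = |X^0|/|X'|$ is not a priori in $[0,1]$, but as you yourself point out at the end this is immaterial since the identity $(u-v)^2\ge 0$ requires no such bound.
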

\begin{proof} Completing the square we get
\begin{multline*}
g^{\alpha\beta}(X)\xi_\alpha\xi_\beta= -\big(\xi_0^2 +2\hat{X}^0\hat{X}^j\xi_0\xi_j +(\hat{X}^0)^2\hat{X}^i \hat{X}^j\xi_i\xi_j\big) +|\hat{X}|^2 \big(\delta^{ij}\xi_i\xi_j
-\hat{X}^i \hat{X}^j\xi_i\xi_j\big)\\
g^{\alpha\beta}(X)\xi_\alpha\xi_\beta= -\big(\xi_0 +\hat{X}^0\hat{X}^j\xi_j\big)^2  +|\hat{X}|^2 \big(\delta^{ij}
-\hat{X}^i \hat{X}^j\big)\xi_i\xi_j
\end{multline*}
If $n$ is a unit vector then one sees that
\begin{equation}
\gamma^{ij} n_i n_j=(1-(\hat{X}^0)^2)(1-(\hat{X}^k n_k)^2)=(1-|\hat{X}^0| |\hat{X}^k n_k|)^2
-(|\hat{X}^0|-|\hat{X}^k n_k|)^2
\end{equation}
from which the last inequality follows.
\end{proof}

Returning to the graph representation by writing $\Psi(t,x,y,z)=z-\phi(t,x,y)$, we have $X=(\phi_t,-\phi_x,-\phi_y,1)$ and $g^{\alpha\beta}\partial_\alpha\partial_\beta\Psi=0$ becomes
\begin{equation}
g^{\alpha\beta}(\partial\phi)\partial_\alpha\partial_\beta\phi=0
\end{equation}
where the sum is only over $\alpha,\beta=0,1,2$, since $\phi$ is independent of $z$; this is seen to co-incide with \eqref{eq:HypMin}. The symbol for this operator
is the same as \eqref{eq:Symbol} but with the sum over only $\alpha,\beta=0,1,2$, i.e. with $\xi$ replaced by $(\xi_0,\xi_1,\xi_2,0)$.
This satisfies G{\aa}rding's hyperbolicity condition, see \cite{J}, if $\sum_{i,j=1,2}\gamma^{ij}\xi_i\xi_j$ is positive definite,
which is the case if the initial surface is time like:
\begin{equation}\label{eq:spacelike}
|X|^2=|\nabla\Psi|^2=1+\phi_x^2+\phi_y^2-\phi_t^2>0,
\end{equation}
since $(\hat{X}^{\prime 1})^2+(\hat{X}^{\prime 2})^2=(\phi_x^2+\phi_y^2)/(\phi_x^2+\phi_y^2+1)<1$.

\subsection{Energy Estimates}

\begin{lemma} Let $g^{\alpha\beta}$ be as in the previous lemma and suppose that $|\hat{X}^0|\leq 1-\varepsilon$.
Suppose the $\phi$ solves the equation
\begin{equation}
\sum_{\alpha\beta=0,1,2} g^{\alpha\beta}(X)\partial_\alpha\partial_\beta \phi=G
\end{equation}
in a set
$$
D_T=\{(x,t); \, |x-x_0|<R-t,\quad 0\leq t<T\}
$$
and let $S_t=\{(x,t); |x-x_0|<R-t\}$ and
$$
E(t)=\int_{S_t} \big((\partial_0+T^j\partial_j)\phi\big)^2+\gamma^{ij}\partial_i\phi\, \partial_j\phi \,\, dx
$$
Then
$$
\sqrt{E(t)}\leq  e^{\int_0^t C_\varepsilon n(s)\, ds} \Big(\sqrt{E(0)}+\int_0^T  \|G(t,\cdot)\|_{L^2(S_t)}\, dt \Big),\qquad n(t)=\sup_{S_t}{\big(|\gamma^\prime|+|T^\prime|\big)}
$$
has energy estimates, with a constant depending on $\varepsilon$ and $T$ and some norms of $X$.
\end{lemma}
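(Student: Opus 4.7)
The plan is the classical multiplier method, adapted to the factorization
\[ g^{\alpha\beta}\xi_\alpha\xi_\beta = -(\xi_0 + T^j\xi_j)^2 + \gamma^{ij}\xi_i\xi_j \]
from the previous lemma. Writing $L := \partial_t + T^j\partial_j$ and $m := L\phi$, the pointwise energy density is $e := m^2 + \gamma^{ij}\partial_i\phi\,\partial_j\phi$, so that $E(t) = \int_{S_t} e\, dx$, and the natural multiplier for the equation is $m$ itself.

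I would first derive a pointwise divergence identity. Multiplying the PDE by $2m$ and using $-2m\,L(m) = -L(m^2) = -\partial_t(m^2) - \partial_k(T^k m^2) + (\partial_k T^k)m^2$, together with integration by parts in the $\gamma^{ij}\partial_i\partial_j\phi$ block to rebuild $\partial_t(\gamma^{ij}\partial_i\phi\,\partial_j\phi)$ via the commutator $[L,\partial_i]$, one obtains
\[ \partial_t e + \partial_j F^j = -2mG + R, \qquad |R| \le C_\eps\, n(t)\, e, \]
where $F^j$ is a suitable spatial flux and $R$ collects all terms in which a derivative has fallen on $T^k$ or $\gamma^{ij}$ (hence is controlled by $n(t)\,e$).

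Next I would integrate this identity over the truncated backward cone $D_t \subset D_T$ and apply the divergence theorem. The top and bottom slices $S_t$, $S_0$ contribute $E(t)$ and $-E(0)$. The crux is to show that the lateral flux through $\Sigma_t = \{|x-x_0|=R-s,\,0\le s\le t\}$ has a favorable sign and may be dropped. On $\Sigma_t$ the outward Minkowski conormal is proportional to $(1,\nu)$, with $\nu$ the Euclidean outward unit normal to the ball; computing the boundary integrand in the variables $(m,\partial_j\phi)$ and completing the square, nonnegativity reduces exactly to the inequality $|T^j\nu_j|+\sqrt{\gamma^{ij}\nu_i\nu_j}\le 1$. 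This is precisely the sharp propagation-speed bound already supplied by the previous lemma (its last displayed inequality, after taking a square root, gives $\sqrt{\gamma^{ij}n_i n_j}\le 1-|T^k n_k|$ for unit $n$). The hypothesis $|\hat X^0|\le 1-\eps$ then enters to make the whole computation uniform: it provides ellipticity of $\gamma^{ij}$ and a controlled $1/(1-|T|)$, producing the $\eps$-dependent constants.

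After Cauchy--Schwarz on the source, $\int 2mG\,dx \le 2\|G(s,\cdot)\|_{L^2(S_s)}\sqrt{E(s)}$, what remains is
\[ E(t) \le E(0) + C_\eps\int_0^t n(s)\,E(s)\,ds + 2\int_0^t \|G(s,\cdot)\|_{L^2(S_s)}\sqrt{E(s)}\,ds, \]
and the square-root form of Gronwall's inequality closes the argument. The principal obstacle I anticipate is the lateral flux calculation: carefully identifying the boundary quadratic form and recognizing it as a sum of squares via the propagation-speed inequality from the previous lemma. Once that step is in place, the bulk error estimate and the Gronwall closure are routine.
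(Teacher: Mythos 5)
Your proposal follows essentially the same route as the paper: multiplier $L\phi$ with $L=\partial_0+T^j\partial_j$, the energy density $(L\phi)^2+\gamma^{ij}\partial_i\phi\,\partial_j\phi$, integration over the truncated cone, nonnegativity of the lateral flux via the inequality $\sqrt{\gamma^{ij}n_in_j}\le 1-|T^kn_k|$ from the previous lemma, Cauchy--Schwarz on the source, and a square-root Gronwall closure. The only cosmetic difference is that you organize the identity in conservation form $\partial_t e+\partial_jF^j=\cdots$ whereas the paper applies $L$ directly to $e$ and then peels off the spatial divergence; these are the same computation.
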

\begin{proof} We have
\begin{equation*}\label{eq:Symbol}
G=g^{\alpha\beta}\partial_\alpha\partial_\beta\phi= -\big(\partial_0 +T^j\partial_j\big)^2 \phi
 +\partial_i \big(\gamma^{ij}\partial_j\phi\big) \\
+\big(\big(\partial_0 +T^k\partial_k\big)T^j
-\partial_i \gamma^{ij} \big)\partial_j\phi
\end{equation*}
Note that
\begin{multline}
(\partial_0 +T^k \partial_k ) \big(\gamma^{ij}\partial_i\phi\, \partial_j\phi\big)\\
 =
2\gamma^{ij}\partial_i\phi\, \partial_j (\partial_0 +T^k \partial_k )\phi
-2\gamma^{ij}(\partial_j T^k)\, \partial_i \phi\, \partial_k\phi+\big((\partial_0 +T^k \partial_k ) \gamma^{ij}\big)\partial_i\phi\, \partial_j\phi\\
=2\partial_j\Big(\gamma^{ij}\partial_i\phi\,  (\partial_0 +T^k \partial_k )\phi\Big)
-2(\partial_0 +T^k \partial_k )\phi\, \partial_j\big(\gamma^{ij}\partial_i\phi\big)\\
+\big((\partial_0 +T^k \partial_k ) \gamma^{ij}\big)\partial_i\phi\, \partial_j\phi-2\gamma^{ij}(\partial_j T^k)\, \partial_i \phi\, \partial_k\phi
\end{multline}
We hence get
\begin{multline*}
(\partial_0 +T^k \partial_k )\Big(\big((\partial_0 +T^j \partial_j )\phi\big)^2
+\gamma^{ij}\partial_i \phi\,\partial_j \phi \Big)\\
=2 \big((\partial_0 +T^k \partial_k )\phi\big)
(\partial_0 +T^j \partial_j )^2\phi+(\partial_0 +T^k \partial_k ) \big(\gamma^{ij}\partial_i\phi\, \partial_j\phi\big)\\
=2 \big((\partial_0 +T^k \partial_k )\phi\big)
\Big((\partial_0 +T^j \partial_j )^2\phi-\, \partial_j\big(\gamma^{ij}\partial_i\phi\big)\Big)
+2\partial_j\Big(\gamma^{ij}\partial_i\phi\,  (\partial_0 +T^k \partial_k )\phi\Big)
\\
+\big((\partial_0 +T^k \partial_k ) \gamma^{ij}\big)\partial_i\phi\, \partial_j\phi-2\gamma^{ij}(\partial_j T^k)\, \partial_i \phi\, \partial_k\phi\\
=2\partial_j\Big(\gamma^{ij}\partial_i\phi\,  (\partial_0 +T^k \partial_k )\phi\Big)
+2 \big((\partial_0 +T^k \partial_k )\phi\big)\Big(\big(\big(\partial_0 +T^k\partial_k\big)T^j
-\partial_i \gamma^{ij} \big)\partial_j\phi\Big)\\
-2 \big((\partial_0 +T^k \partial_k )\phi\big) G
+\big((\partial_0 +T^k \partial_k ) \gamma^{ij}\big)\partial_i\phi\, \partial_j\phi-2\gamma^{ij}(\partial_j T^k)\, \partial_i \phi\, \partial_k\phi
\end{multline*}
If we integrate this over $S_t$ we get
\begin{equation}
E(t)-E(0)+H(t)=R(t)+G(t),
\end{equation}
Were the flux is given by
\begin{equation}
H(t)=\int_{C_t}H^{\alpha\beta}\partial_\alpha\phi\, \partial_\beta \phi \, dx
\end{equation}
where $C_T=\{ (x,t);\, |x-x_0|=R-t, \, 0\leq t\leq T\}$ and
\begin{equation*}
H^{\alpha\beta} \partial_\alpha \phi\, \partial_\beta\phi=
\Big(\big((\partial_0 +T^j \partial_j )\phi\big)^2
+\gamma^{ij}\partial_i \phi\,\partial_j \phi \Big) (1+T^k n_k) -2 n_j\gamma^{ij}\partial_i\phi\,  (\partial_0 +T^k \partial_k )\phi,
\end{equation*}
and the remainder is
\begin{equation}
R(t)=\int_0^t\int_{D_s} R^{\alpha\beta}\partial_\alpha\phi\, \partial_\beta \phi \,\, dx ds
\end{equation}
where
\begin{multline}
R^{\alpha\beta}\partial_\alpha\phi\, \partial_\beta \phi
=-(\partial_k T^k )\Big(\big((\partial_0 +T^j \partial_j )\phi\big)^2
+\gamma^{ij}\partial_i \phi\,\partial_j \phi \Big)
\\
+2 \big((\partial_0 +T^k \partial_k )\phi\big)\Big(\big(\big(\partial_0 +T^k\partial_k\big)T^j
-\partial_i \gamma^{ij} \big)\partial_j\phi\Big)\\
+\big((\partial_0 +T^k \partial_k ) \gamma^{ij}\big)\partial_i\phi\, \partial_j\phi-2\gamma^{ij}(\partial_j T^k)\, \partial_i \phi\, \partial_k\phi
\end{multline}
and the inhomogeneous term is
\begin{equation}
G(t)=\int_0^t\int_{D_s} 2 \big((\partial_0 +T^k \partial_k )\phi\big) G \, dx ds \leq 2\int_0^t \sqrt{E(s)}\|G(s,\cdot)\|_{L^2(S_s)}\, ds
\end{equation}
It follows from the previous lemma that
\begin{multline}
H^{\alpha\beta}\xi_\alpha\xi_\beta=\big( (\xi_0+T^j\xi_j)^2+\gamma^{ij}\xi_i\xi_j \big)(1+T^k n_k)-2n_i \gamma^{ij}\xi_j
(\xi_0+T^j\xi_j)\\ \geq \big( (\xi_0+T^j\xi_j)^2+\gamma^{ij}\xi_i\xi_j \big)(1+T^k n_k)
-2\sqrt{\gamma^{ij}n_i n_j}\sqrt{ \gamma^{ij}\xi_i\xi_j}\,\big|\xi_0+T^j\xi_j\big|\\
\geq \big( (\xi_0+T^j\xi_j)^2+\gamma^{ij}\xi_i\xi_j \big)(1+T^k n_k)
-2(1-|T^k n_k|)\sqrt{ \gamma^{ij}\xi_i\xi_j}\,\big|\xi_0+T^j\xi_j\big|\geq  0
\end{multline}
 and hence
that the Flux is nonnegative. Moreover
$$
|R^{\alpha\beta}\partial_\alpha\phi\,\partial_\beta\phi|
\leq C_\varepsilon\big(|\partial T|+|\partial\gamma|\big) \Big( ((\partial_0+T^k\partial_k)\phi)^2+\gamma^{ij}\partial_i\phi\,\partial_j\phi\Big)
$$
We get the inequality
$$
E(t)\leq E(0)+\int_0^t C_\varepsilon n(s)\, E(s)+2 \sqrt{E(s)}\|G(s,\cdot)\|_{L^2(S_s)}\,  ds
$$
from which the lemma follows by a standard Gr{\"o}nwall argument.
\end{proof}

\begin{lemma}\label{lem:apriori} Let $g^{\alpha\beta}(X)$ be as in the previous lemma. Suppose that $|\hat{X}^0|\leq 1-\varepsilon$, for $0\leq t\leq T$ and  $X,h\in L^\infty([0,T],C^{2k}(\bold{R}^2))$ and $\phi\in L^\infty([0,T],H^k(\bold{R}^2))$.
Let
\begin{equation}
P\phi=\sum_{\alpha\beta=0,1,2} g^{\alpha\beta}(X)\partial_\alpha\partial_\beta \phi +\sum_{\beta=0,1,2} h^\beta\partial_\beta \phi.
\end{equation}
Then we have
\begin{equation}
\sum_{|\gamma|\leq 1} \|\partial^\gamma \phi(t,\cdot)\|_{H^s}\leq C\Big(\sum_{|\gamma|\leq 1}\|\partial^\gamma \phi(0,\cdot)\|_{H^s}+\int_0^t \|P\phi(\tau,\cdot)\|_{H^s}\, d\tau\Big) ,\qquad 0\leq t\leq T
\end{equation}
for any positive or negative integer $|s|\leq k$. Here $\|u\|_{H^s}^2=\int |\hat{u}(\xi)|^2(1+|\xi|^2)^{s} d\xi$, where $\hat{u}$ is the Fourier transform. The constant $C$ depends on $n(T), T, \varepsilon, X, h$. The inequality also applies for $s\leq k$ a nonnegative integer and\footnote{In this notation it is understood that also $X_t\in H^{k-1}$.} $X, h\in L^\infty([0,T],H^k(\bold{R}^2))$, provided $k\geq 4$.
\end{lemma}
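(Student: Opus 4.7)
The plan is to reduce the $H^s$ a priori estimate to the $L^2$ energy inequality of the previous lemma by commuting appropriate multipliers through $P$; the case $s\geq 0$ is handled directly, and $s<0$ by duality.

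For integer $s\geq 0$, I apply the spatial derivatives $\partial^\gamma_x$ with $|\gamma|\leq s$ to $P\phi=F$, which yields
\begin{equation*}
P(\partial^\gamma\phi)=\partial^\gamma F - [\partial^\gamma, g^{\alpha\beta}]\partial_\alpha\partial_\beta\phi - [\partial^\gamma,h^\beta]\partial_\beta\phi.
\end{equation*}
Each commutator is, via Leibniz, a sum of terms with at least one derivative on the coefficients and at most $s+1$ derivatives on $\phi$. Under the hypothesis $X,h\in L^\infty_t C^{2k}_x$ this is bounded pointwise by $C(\|X,h\|_{L^\infty_t C^{s+1}_x})\sum_{|\beta|\leq s+1}|\partial^\beta\phi|$; under the rougher $H^k$ hypothesis the analogous $L^2$ bound follows from standard Moser/Kato--Ponce product estimates together with the 2D Sobolev embedding $H^{k-2}\hookrightarrow L^\infty$, which is what forces $k\geq 4$.

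Now I apply the energy estimate of the previous lemma to $\partial^\gamma\phi$, reading the identity on all of $\R^2$: the flux term on $C_T$ drops in the limit $R\to\infty$, since the $L^2$ integrability of $\partial\phi$ on $[0,T]\times\R^2$ implies that the flux through cones of radius $R-t$ goes to zero along a subsequence. Summing the resulting inequality over $|\gamma|\leq s$, absorbing the commutator contribution into the Gr\"onwall integrand, and using that $\sqrt{E(\partial^\gamma\phi)(t)}$ is equivalent (with constants depending only on $\varepsilon$ and $\|T,\gamma\|_{L^\infty}$) to $\sum_{|\beta|=|\gamma|+1}\|\partial^\beta\phi(t)\|_{L^2}$, one obtains the bound on $\sum_{|\beta|\leq s+1}\|\partial^\beta\phi(t)\|_{L^2}$. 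The remaining $\|\phi(t)\|_{H^s}$ piece of $\sum_{|\gamma|\leq 1}\|\partial^\gamma\phi(t)\|_{H^s}$ is then recovered from the trivial inequality $\|\phi(t)\|_{H^s}\leq\|\phi(0)\|_{H^s}+\int_0^t\|\phi_t(\tau)\|_{H^s}d\tau$.

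For negative integer $s=-m$ with $1\leq m\leq k$, I proceed by duality. The formal adjoint $P^*\psi=\partial_\alpha\partial_\beta(g^{\alpha\beta}\psi)-\partial_\beta(h^\beta\psi)$ has the same principal symbol $g^{\alpha\beta}\partial_\alpha\partial_\beta$, so the hyperbolicity and energy structure are preserved, and its lower-order coefficients involve at most two derivatives of $X,h$, which remain admissible for running the positive-$s$ estimate at level $H^m$ for $P^*$ backwards in time. Given any test $v\in\calS(\R^2)$ with $\|v\|_{H^m}\leq 1$, I solve the backward problem for $P^*$ on $[0,t]$ with terminal data designed so that the standard integration-by-parts identity
\begin{equation*}
\la\phi(t),v\ra_{L^2} = (\text{pairings of }\phi(0),\partial_t\phi(0)\text{ with terminal data of }\psi) + \int_0^t\la F,\psi\ra_{L^2}\,d\tau
\end{equation*}
holds. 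The positive-$s$ estimate applied to $\psi$ bounds $\sum_{|\gamma|\leq 1}\|\partial^\gamma\psi(\tau)\|_{H^m}$ in terms of $\|v\|_{H^m}$, and taking the supremum over $v$ produces the desired $H^{-m}$ bound on $\phi$ and $\partial_t\phi$.

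The main obstacle is the bookkeeping in the duality step: one must verify that the adjoint estimate at level $H^m$ is still available after two derivatives have landed on $X,h$, and that the terminal data for $\psi$ can be arranged so that the integration-by-parts identity delivers exactly $\la\phi(t),v\ra$ on the left. The commutator analysis in the low-regularity case is also delicate — one must check that the Moser product estimates close uniformly for all $|\gamma|\leq s\leq k$ simultaneously, and this is precisely what the constraint $k\geq 4$ buys.
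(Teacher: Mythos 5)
Your treatment of the positive-$s$ case matches the paper's: differentiate by $\partial_x^\gamma$ (only spatial derivatives, so the coefficient of $\partial_0^2$ stays constant), observe that the commutator has the form $\sum_{|\delta|\leq|\gamma|}f_\delta\cdot\partial_x^\delta\partial\phi$, apply the energy lemma, recover the zero-order piece of $\phi$ from $\phi_t$, and — in the rough-coefficient case — use the $2$D Sobolev embedding $H^2(\R^2)\subset L^\infty(\R^2)$ to control the $f_\delta$'s, which is what needs $k\geq 4$. That part is fine.

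Your negative-$s$ argument diverges from the paper and, as written, has a circularity. The paper handles $s=-k$ by conjugation: set $\psi=(I-\triangle_x)^{-k}\phi$, compute $P\phi=(I-\triangle_x)^kP\psi-R\partial\psi$ with $R$ of the explicit form $\sum_{|\gamma|,|\delta|\leq k}\partial_x^\gamma(f_{\gamma\delta}\partial^\delta\cdot\partial\psi)$, note $\|R\partial\psi\|_{H^{-k}}\lesssim\|\partial\psi\|_{H^k}$, and then run the positive-$s$ estimate for $\psi$ plus Gr\"onwall. This stays entirely on the level of a priori bounds for the given $\phi$. Your proposal instead estimates $\|\phi(t)\|_{H^{-m}}$ by duality against a solution $\psi$ of the backward adjoint problem $P^*\psi=0$ — but this requires an existence theorem for $P^*$, which in the logical structure of this paper is established only in the \emph{next} lemma, and that existence proof uses precisely the negative-$s$ a priori estimate for the adjoint that you are in the middle of proving. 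You cannot simply restrict to smooth compactly supported test $\psi$ either, since then $P^*\psi\neq 0$ and the integration-by-parts identity produces the extra term $\int_0^t\langle\phi,P^*\psi\rangle$, which you would need to absorb using the very $H^{-m}$ bound you are after. To rescue the duality route you would have to invoke an independent existence theory for smooth-coefficient strictly hyperbolic operators (defeating the self-contained purpose of the section) or rework the argument along Hahn–Banach lines. The paper's conjugation trick avoids all this: it is a pointwise-in-time algebraic reduction of the negative case to the positive one, needing no solvability of any auxiliary problem. You should replace your duality step with that conjugation.
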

\begin{proof} Following \cite{H} section 6.3 we differentiate the equation
with respect to $x$ derivatives only $\partial_x^\gamma=\partial_1^{\gamma_1} \partial_2^{\gamma_2}$
and use that the coefficient in front of $\partial_0^2$ is constant to obtain
\begin{equation}\label{eq1:lemma23}
P\partial_x^\gamma \phi
=\sum_{|\delta|\leq |\gamma|} f_\delta\cdot  \partial_x^\delta \partial\phi+\partial_x^\gamma P\phi
\end{equation}
and the result for positive $s$ follows from using the previous lemma in $\bold{R}^n\times [0,T]$,
together with that the $\|\phi(t,\cdot)\|_{L^2}$ is bounded by its value when $t=0$ plus a constant times
the time derivative in the interval.

For negative $s=-k$ we set
\begin{equation}
\psi=(I-\triangle_x)^{-k} \phi ,\qquad \phi=(I-\triangle_x)^k \psi
\end{equation}
Then
\begin{equation}\label{eq2:lemma23}
P\phi=
(I-\triangle_x)^k P \psi
-R\partial \psi
\end{equation}
where $R$ is a differential operator of the form
\begin{equation}
R\partial \psi =\sum_{|\gamma|,|\delta|\leq k}
\partial_x^\gamma \big( f_{\gamma\delta} \partial^\delta \cdot\partial\psi\big).
\end{equation}
We write this as
\begin{equation}
P\psi=(I-\triangle_x)^{-k} \big(R\partial\psi+ P\phi\big)
\end{equation}
By definition of the Sobolev norm for negative $s$
\begin{equation}
\|(I-\triangle_x)^{-k} P\phi\|_{H^{k}}\sim\|(I-\triangle_x)^{-k/2} P\phi\|_{L^2}=\|P\phi\|_{H^{-k}}
\end{equation}
The lemma for positive $s$ therefore gives
\begin{equation*}
\sum_{|\gamma|\leq 1}\|\partial^\gamma \psi(t,\cdot)\|_{H^k}\leq C\Big(\sum_{|\gamma|\leq 1} \|\partial^\gamma \psi(0,\cdot)\|_{H^k}
+\int_0^t \|R\partial\psi (\tau,\cdot)\|_{H^{-k}}+\|P\phi(\tau,\cdot)\|_{H^{-k}}\, d\tau\Big).
\end{equation*}
It follows from the particular form of $R$ that
\begin{equation}
\|R \partial\psi (t,\cdot)\|_{H^{-k}}\leq C\|\partial\psi(t,\cdot)\|_{H^k}
\end{equation}
and therefore by Gr{\"o}nwalls lemma that
\begin{equation}
\sum_{|\gamma|\leq 1}\|\partial^\gamma \psi(t,\cdot)\|_{H^k}\leq C^\prime\Big(\sum_{|\gamma|\leq 1}\|\partial^\gamma \psi(0,\cdot)\|_{H^k}
+\int_0^t \|P\phi (\tau,\cdot)\|_{H^{-k}}\, d\tau\Big).
\end{equation}
Since $\sum_{|\gamma|\leq 1}\|\partial^\gamma \phi(t,\cdot)\|_{H^{-k}}\sim\sum_{|\gamma|\leq 1}\|\partial^\gamma \psi(t,\cdot)\|_{H^k}$
this concludes the proof of the first part of the lemma.
\\
To conclude under the assumption $X\in L^\infty([0,T],H^k(\bold{R}^2)), k\geq 4$, and $s$ a nonnegative integer, observe that in \eqref{eq1:lemma23} we have 
\[
\sum_{|\delta|\leq |\gamma|}\|f_{\delta}\partial_x^{\delta}\partial\phi\|_{L_x^2}\lesssim \|\phi\|_{H^{k+1}}
\]
due to Sobolev's embedding $H^2(\R^2)\subset L^\infty(\R^2)$. 
\end{proof}

\subsection{Local existence}

\begin{lemma} Let $g^{\alpha\beta}(X)$ be as in the previous lemma. Suppose that $|\hat{X}^0|\leq 1-\varepsilon$, for $0\leq t\leq T$ and  $X\in L^\infty([0,T],C^{2k}(\bold{R}^2))$ and $g\in H^{k+1}(\bold{R}^2)$, $h\in H^k(\bold{R}^2)$
Then the equation
\begin{equation}
\sum_{\alpha\beta=0,1,2} g^{\alpha\beta}(X)\partial_\alpha\partial_\beta \phi=0,\qquad\phi\Big|_{t=0}=g,\quad
\partial_t\phi\Big|_{t=0}=h
\end{equation}
has a solution $\partial\phi \in L^\infty([0,T],H^k(\bold{R}^2))$.
\end{lemma}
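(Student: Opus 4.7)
The plan is to construct the solution by smooth approximation: mollify the coefficient $X$ and the data, invoke classical existence theory for the resulting smooth-coefficient linear hyperbolic problems, and pass to the limit using the uniform energy estimates of Lemma~\ref{lem:apriori}. First I would reduce to an inhomogeneous problem with zero Cauchy data. Define the extension
\[
\phi_0(t,x) := g(x) + t\, h(x),
\]
so that $\phi_0|_{t=0}=g$, $\partial_t\phi_0|_{t=0}=h$, $\phi_0 \in L^\infty([0,T], H^{k+1})$, and $\partial_t\phi_0 \in L^\infty([0,T],H^k)$. Since $g^{\alpha\beta}(X)\in L^\infty([0,T],C^{2k})$, Moser-type product estimates give
\[
F \;:=\; -\sum_{\alpha,\beta=0,1,2} g^{\alpha\beta}(X)\,\partial_\alpha\partial_\beta\phi_0 \;\in\; L^\infty\bigl([0,T], H^{k-1}(\R^2)\bigr).
\]
Writing $\phi=\phi_0+\psi$, the problem becomes $P\psi = F$ with $\psi|_{t=0}=0=\partial_t\psi|_{t=0}$, and the gain of one derivative in the Cauchy data regularity is precisely what is needed for $F$ to be controllable.

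Next, let $\rho_\delta$ be a standard mollifier in the spatial variables and set $X_\delta := \rho_\delta *_x X$ and $F_\delta := \rho_\delta *_x F$ (additional smoothing in $t$ can be performed if desired). For $\delta$ sufficiently small we still have $|\hat{X}_\delta^0|\leq 1-\varepsilon/2$ uniformly on $[0,T]$, and $\{X_\delta\}$ is uniformly bounded in $L^\infty_t C^{2k}_x$. Each mollified equation $P_\delta\psi_\delta = F_\delta$ has smooth coefficients and smooth source; classical existence theory for linear second-order hyperbolic equations with smooth variable coefficients (for instance, via reduction to a first-order symmetric hyperbolic system and the Friedrichs/Kato theory, or by Faedo--Galerkin as in Evans) produces a smooth solution $\psi_\delta$ with zero Cauchy data.

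Applying Lemma~\ref{lem:apriori} to each $\psi_\delta$ gives
\[
\sum_{|\gamma|\leq 1} \|\partial^\gamma \psi_\delta(t,\cdot)\|_{H^k(\R^2)} \;\leq\; C\int_0^t \|F_\delta(\tau,\cdot)\|_{H^k}\,d\tau \;\leq\; C'
\]
with $C'$ independent of $\delta$ (using that mollification gains one derivative, promoting the $H^{k-1}$ bound on $F$ to an $H^k$ bound on $F_\delta$ with constant depending only on $\|F\|_{H^{k-1}}$). Extracting a weak-$*$ convergent subsequence yields a limit $\psi$ with $\partial\psi \in L^\infty([0,T], H^k)$. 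Passage to the limit in $P_\delta \psi_\delta = F_\delta$ uses strong convergence $g^{\alpha\beta}(X_\delta)\to g^{\alpha\beta}(X)$ in $L^\infty_{t,x}$ together with weak convergence of the second derivatives of $\psi_\delta$, which suffices at the level of distributions. The zero Cauchy data are preserved in the limit through the equi-continuity of $\psi_\delta$ and $\partial_t\psi_\delta$ in $t$ with values in a weaker topology, which follows from the uniform bounds together with the equation itself. Finally $\phi := \phi_0 + \psi$ solves the original problem.

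The main obstacle is really granting the classical existence step for the smooth approximations; once that is done, Lemma~\ref{lem:apriori} and compactness handle the rest. A self-contained alternative, avoiding external theory, is duality: the adjoint $P^*$ has the same principal structure as $P$, so Lemma~\ref{lem:apriori} also gives a backward energy estimate for $P^*$ in $H^{-k}$. Via Hahn--Banach one constructs a weak solution of $P\psi = F$ with zero data, and then applies the forward estimate of Lemma~\ref{lem:apriori} to difference quotients (or directly to mollifications of the weak solution) to bootstrap to the required regularity.
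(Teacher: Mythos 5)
Your primary route (mollify coefficients and data, invoke classical existence for smooth linear hyperbolic equations, pass to the limit via the a priori estimate and compactness) is genuinely different from the paper's, which builds the solution directly by duality: applying Lemma~\ref{lem:apriori} to the adjoint $P^*$ backwards in time to get a negative-norm estimate, then using Hahn--Banach to extract a weak solution with zero Cauchy data from a bounded linear functional, and finally reducing general data to zero data. You sketch exactly this as your ``self-contained alternative'' at the end, so you have the paper's idea in hand; the difference is that the paper makes it the main argument, while you defer to external classical theory (Friedrichs/Kato or Faedo--Galerkin) for the smooth approximants. Either architecture can work, and the mollification route is arguably more transparent, but the duality route is preferable here because it proves the existence statement with no imported black boxes.

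However, there is a genuine gap in the reduction step of your primary argument. Setting $\phi_0 = g + t\,h$ and $F = -\sum g^{\alpha\beta}\partial_\alpha\partial_\beta\phi_0$ does not give $F \in L^\infty([0,T],H^{k-1})$: the term $t\,\partial_i\partial_j h$ only lies in $H^{k-2}$ since $h\in H^k$, so in fact $F\in L^\infty([0,T],H^{k-2})$. More fundamentally, even $F\in L^\infty H^{k-1}$ would not suffice: Lemma~\ref{lem:apriori} controls $\partial\psi$ in $H^s$ by $\|P\psi\|_{H^s}$ at the \emph{same} index $s$, so a source in $H^{k-1}$ yields only $\partial\psi\in H^{k-1}$, one derivative short of the claim. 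The subsequent assertion that mollification ``gains one derivative, promoting the $H^{k-1}$ bound on $F$ to an $H^k$ bound on $F_\delta$ with constant depending only on $\|F\|_{H^{k-1}}$'' is false; one has $\|F_\delta\|_{H^k}\lesssim \delta^{-1}\|F\|_{H^{k-1}}$, which diverges as $\delta\to0$, so the asserted uniformity of $C'$ collapses. The clean fix within your framework is to \emph{not} reduce to zero data: mollify $X$, $g$, and $h$ simultaneously, solve the homogeneous smooth problems with data $(g_\delta,h_\delta)$, and apply Lemma~\ref{lem:apriori} directly to the homogeneous equation, where the source term vanishes and the initial-data terms $\|g_\delta\|_{H^{k+1}}+\|h_\delta\|_{H^k}$ are uniformly bounded. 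Then compactness gives the limit solution at the correct regularity.
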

\begin{proof} Following \cite{H} section 6.3, we introduce the adjoint operator
\begin{equation}
P^* \phi=\sum_{\alpha\beta=0,1,2} \partial_\alpha \partial_\beta\big( g^{\alpha\beta}(X)\phi\big).
\end{equation}
By the estimate in the previous lemma applied to $L^*$ with $t$ replaced by $T-t$ we have
\begin{equation}
\|\phi(t,\cdot)\|_{H^{-k}}\les \int_t^T \|P^* \phi (s,\cdot)\|_{H^{-k-1}}\, ds ,\qquad\phi\in C_0^\infty\big([-\infty,T)\times\bold{R}^2\big).
\end{equation}If $f\in L^1([0,T]; H^k(\bold{R}^2))$, then
\begin{equation}
|(f,\phi)|=\big|\int_0^T \big( f(t,\cdot),\phi(t,\cdot)\big)\, dt \big|\leq C\int_t^T \|P^* \phi (s,\cdot)\|_{H^{-k-1}}\, ds
\end{equation}
This therefore defines a linear functional $L(\psi)=(f,\phi)$ for all $\psi$ of the form
$\psi=P^* \phi$, for some $\phi\in C_0^\infty\big([-\infty,T)\times\bold{R}^2\big)$. Since this defines a linear
subspace of $L^1([-\infty,T]; H^{-k-1}(\bold{R}^2))$, where we have the bound \begin{equation}
|L(\psi)|\leq C\int_0^T \|\psi(s,\cdot)\|_{H^{-k-1}}\, ds.
\end{equation}
the functional can by the Hahn-Banach theorem be extended to the whole space without increasing the bound. Therefore there is an element in the dual space $u\in L^\infty([-\infty,T]; H^{k+1}(\bold{R}^2))$
such that 
\begin{equation}
L(\psi)=(u,\psi),\qquad \psi \in L^1([-\infty,T]; H^{-k-1}(\bold{R}^2)). 
\end{equation}
(That the dual space of $H^{-k}$ is $H^k$ follows from Parseval's formula and the fact that by Riesz Representation theorem its true for $L^2$.)
In view of the bound it follows that $u(t,x)=0$, for $t\leq 0$.  
In particular it follows that 
\begin{equation}
(f,\phi)=(u,P^*\phi)
\end{equation}
for all $\phi\!\in\! C_0^\infty([0,T)\!\times\bold{R}^2)$, i.e. $u$ is a distributional solution of the equation
$Pu\!=\!f$, when $0<t\leq T$, with vanishing Cauchy data. A solution with arbitrary Cauchy data is obtained if one choose any function $u_0$ with given data and introduce $u-u_0$ as unknown. \end{proof}

\begin{prop}\label{prop25} Suppose that $f$ and $h$ are smooth functions on $S_0=\{x; \, |x-x_0|\leq R+2\varepsilon\}$
such that
\begin{equation}
1+f_{x_1}^2+f_{x_2}^2 -h^2\geq 2\varepsilon(1+f_{x_1}^2+f_{x_2}^2),
\qquad\text{and}\qquad \|f\|_{H^5(S_0)}+\|h\|_{H^4(S_0)}\leq K.
\end{equation}
Then there is a $T_{\varepsilon, K}>0$ such that the initial value problem
$$
g^{\alpha\beta}(\partial\phi)\partial_{\alpha}\partial_\beta\phi =0
,\qquad \phi\big|_{t=0}=f,\quad \partial_t\phi\big|_{t=0}=h.
$$
has a solution in $D_{T_\varepsilon}=\cup_{0\leq t\leq T_\varepsilon} S_t$, where $S_t=\{(t,x);\, |x-x_0|\leq R-t\}$, satisfying
$1+\phi_{x_1}^2+\phi_{x_2}^2 -\phi_t^2\geq \varepsilon(1+\phi_{x_1}^2+\phi_{x_2}^2)$ and $\|\phi(t,\cdot)\|_{H^5(S_t)}+\|\phi_t(t,\cdot)\|_{H^4(S_t)}\leq 2K$, for $0\leq t\leq T_\varepsilon$. Any higher regularity of the initial data (i. e. the property to belong to $H^s$, $s>5$) is preserved. 
\end{prop}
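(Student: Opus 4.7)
The plan is to run a Picard iteration built on the linear existence lemma, use Lemma \ref{lem:apriori} to maintain uniform $H^5 \times H^4$ bounds, and invoke finite speed of propagation---through the nonnegativity of the flux $H^{\alpha\beta}$ established in the preceding energy estimate---to restrict to the cone $D_{T_\varepsilon}$. First I would extend the data by a Sobolev extension to $\tilde f \in H^5(\R^2)$, $\tilde h \in H^4(\R^2)$ with norms comparable to $K$, and so that the slightly weaker timelike inequality $1 + |\nabla_x \tilde f|^2 - \tilde h^2 \geq \tfrac{3}{2}\varepsilon(1+|\nabla_x \tilde f|^2)$ holds on all of $\R^2$; the factor $2\varepsilon$ in the hypothesis provides exactly this slack. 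Setting $\phi^{(0)}(t,x) = \tilde f(x) + t\tilde h(x)$, I would inductively define $\phi^{(n+1)}$ as the solution of
\begin{equation*}
\sum_{\alpha,\beta=0,1,2} g^{\alpha\beta}(\partial\phi^{(n)})\partial_\alpha\partial_\beta \phi^{(n+1)} = 0, \qquad \phi^{(n+1)}\big|_{t=0} = \tilde f, \qquad \partial_t\phi^{(n+1)}\big|_{t=0} = \tilde h,
\end{equation*}
produced by the preceding linear existence lemma, provided the timelike condition is retained by $\partial\phi^{(n)}$.

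Second, I would prove inductively that $\|\phi^{(n)}\|_{L^\infty_t H^5} + \|\partial_t\phi^{(n)}\|_{L^\infty_t H^4} \leq 2K$ on a time interval $[0,T_\varepsilon]$ with $T_\varepsilon$ depending only on $K$ and $\varepsilon$. Since $H^2(\R^2) \hookrightarrow L^\infty$, the bound $2K$ on $\partial\phi^{(n)}$ in $H^4$ yields both control of the coefficient $n(t)$ in Lemma \ref{lem:apriori} by some $C(K)$ and the pointwise estimate $\|\partial\phi^{(n)}(t) - \partial\phi^{(n)}(0)\|_{L^\infty} \lesssim K\cdot t$; for $T_\varepsilon$ small, the timelike condition therefore survives with constant $\varepsilon$. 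Lemma \ref{lem:apriori} with $s = 4$ then gives
\begin{equation*}
\sum_{|\gamma|\leq 1}\|\partial^\gamma \phi^{(n+1)}(t)\|_{H^4} \leq C(K)\, e^{C(K)T_\varepsilon}\sum_{|\gamma|\leq 1}\|\partial^\gamma\phi^{(n+1)}(0)\|_{H^4},
\end{equation*}
which is at most $2K$ once $T_\varepsilon$ is small enough, closing the induction.

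Third, convergence follows from a contraction argument at a lower regularity. The difference $\delta^{(n)} = \phi^{(n+1)} - \phi^{(n)}$ solves
\begin{equation*}
\sum_{\alpha,\beta} g^{\alpha\beta}(\partial\phi^{(n)})\partial_\alpha\partial_\beta \delta^{(n)} = -\bigl[g^{\alpha\beta}(\partial\phi^{(n)}) - g^{\alpha\beta}(\partial\phi^{(n-1)})\bigr]\partial_\alpha\partial_\beta \phi^{(n)}
\end{equation*}
with zero Cauchy data, and the right-hand side is bounded in $H^3$ by $C(K)\,\|\partial\delta^{(n-1)}\|_{L^\infty_t H^3}$. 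Applying Lemma \ref{lem:apriori} with $s = 3$ (still comfortably below the top regularity) yields $\|\partial\delta^{(n)}\|_{L^\infty_t H^3} \leq C(K)\, T_\varepsilon \|\partial\delta^{(n-1)}\|_{L^\infty_t H^3}$; a final shrinking of $T_\varepsilon$ turns this into a strict contraction, so $\phi^{(n)}$ converges strongly in $L^\infty_t H^4$. The uniform bound yields a weak-$\ast$ $L^\infty_t H^5$ limit $\phi$ agreeing with the strong limit, and strong $L^\infty_{t,x}$-convergence of $\partial\phi^{(n)}$ lets one pass to the limit in the nonlinear coefficient; lower semicontinuity places $\phi$ in the required regularity class.

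Finally, the nonnegativity of the flux $H^{\alpha\beta}$ in the preceding energy lemma encodes finite speed of propagation, so the restriction of $\phi$ to $D_{T_\varepsilon}$ is independent of how the data were extended; the quantitative timelike condition is inherited on $D_{T_\varepsilon}$ because the uniform $H^4$-bound for $\partial\phi$ controls it in $L^\infty$. Preservation of higher regularity $H^s$, $s > 5$, is obtained by rerunning the iteration at higher $s$, with the same $T_\varepsilon$, since the lifespan only depends on the $H^5\times H^4$-norm and $\varepsilon$. The main difficulty I expect is the bookkeeping in the second paragraph: the Gr\"onwall constant $C(K)$ itself depends on $K$, and one must verify that the $H^4$-bound and the timelike condition survive simultaneously on a common $T_\varepsilon = T_\varepsilon(K,\varepsilon)$; a secondary subtlety is the one-derivative loss in the contraction step, which forces the contraction to be run a level below the top regularity, hence the importance of starting the iteration in $H^5 \times H^4$ rather than $H^4 \times H^3$.
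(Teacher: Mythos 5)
Your proposal is correct and follows essentially the same route as the paper: extend the data off $S_0$ while preserving the timelike condition, set up a Picard iteration on top of the linear existence lemma, use Lemma~\ref{lem:apriori} for uniform $H^5\times H^4$ bounds, contract at one lower level of regularity, and restrict to the cone $D_{T_\varepsilon}$ via finite speed of propagation encoded in the nonnegative flux. The paper's proof is more terse (it extends by two nested cutoffs to get compact support and the Minkowski metric outside a compact set, initializes the iteration at $\phi^0=0$, and defers the remaining bookkeeping to H\"ormander, Section 6.3), whereas you spell out the contraction step, the survival of the timelike condition, and the higher-regularity statement explicitly; these are consistent with the cited standard argument.
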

\begin{proof} First we note that we can extend data outside the set $|x-x_0|<R$
so that the conditions on $(f,h)$ hold everywhere
and $(f,h)$ have compact support. Just multiply $h$ by a cutoff and then $f$ by another cutoff which is
unity on the support of the first cutoff. Outside a compact set our metric is then the Minkowski metric.
We now set up an iteration $\phi^0=0$ and for $k\geq 1$
\begin{equation}
g^{\alpha\beta}(\partial\phi^{k-1})\partial_\alpha\partial_\beta \phi^k=0
,\qquad \phi^k\big|_{t=0}=f,\quad \partial_t\phi^{k}\big|_{t=0}=h.
\end{equation}
The existence of (smooth) solutions to the linear equation above was given in the previous lemma.
What remains to show is that $\phi^k$ converges, which will follow from first proving that
the sequence is uniformly bounded with respect to $H^5$. This in turn will follow from the energy estimate, Lemma~\ref{lem:apriori} above
after first differentiating the equation to obtain equations and estimates for higher derivatives. This is a standard argument that can be found e.g. in \cite{H} section 6.3.
\end{proof}

\begin{theorem} Suppose that $S$ is a smooth surface ($2$ manifold in $3$ dimensional Euclidean space).
Suppose also that $\kappa$ is a smooth function on $S$ satisfying $|\kappa|<1$. Then there is $3$ manifold $M$
in $4$ dimensional Minkowski space satisfying the Minimal surface equation and such that $S=S_0$ , where $S_t=\{x; (t,x)\in M\}$ and $\kappa$ is the normal velocity of $S_t$, 
when $t=0$. 
\end{theorem}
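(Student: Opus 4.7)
The strategy is to convert the geometric Cauchy data $(S, \kappa)$ into graph Cauchy data $(f, h)$ in finitely (or countably) many local charts, apply Proposition~\ref{prop25} in each chart, and then assemble the pieces using uniqueness and the geometric invariance of the minimal surface equation.

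First, for each $p \in S$ I would fix a Euclidean rigid motion $\Theta_p$ of $\R^3$ sending $p$ to the origin and the tangent plane $T_p S$ to the horizontal plane $\{z = 0\}$. Near the origin, $\Theta_p(S)$ is a graph $z = f_p(x_1, x_2)$ with $f_p(0) = 0$, $\nabla f_p(0) = 0$. Choosing $R_p > 0$ small enough that $\Theta_p(S) \cap \{|x| \leq R_p + 2\varepsilon\}$ is still such a graph, and extending $f_p$ smoothly to all of $\R^2$, I convert $\kappa$ into graph data via the observation that a graph $z = \phi(t, x)$ has Euclidean normal velocity $\phi_t/\sqrt{1+|\nabla_x\phi|^2}$; accordingly I set
\[
h_p(x) := \kappa\bigl(\Theta_p^{-1}(x, f_p(x))\bigr)\sqrt{1+|\nabla f_p(x)|^2}.
\]
Then $1 + |\nabla f_p|^2 - h_p^2 = (1 - \kappa^2)(1 + |\nabla f_p|^2)$, and since $|\kappa| < 1$ on a neighborhood of $p$ there is $\varepsilon_p > 0$ with $1 + |\nabla f_p|^2 - h_p^2 \geq 2\varepsilon_p(1 + |\nabla f_p|^2)$ on the relevant disk; the Sobolev bounds on $(f_p, h_p)$ are automatic from the smoothness of $S$ and $\kappa$.

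Second, Proposition~\ref{prop25} applied to each pair $(f_p, h_p)$ produces a smooth local solution $\phi_p$ on a backward light cone $D_{T_p}$, and the corresponding piece of the candidate manifold is
\[
M_p := \{(t, \Theta_p^{-1}(x, \phi_p(t, x))) : (t, x) \in D_{T_p}\} \subset \R^{1+3}.
\]
To patch these pieces, suppose $p, q \in S$ have overlapping charts; the change of coordinates $\Theta_q \circ \Theta_p^{-1}$ is an isometry of $\R^3$, hence of the ambient Minkowski metric. Since the equation $g^{\alpha\beta}(\nabla\Psi)\nabla_\alpha\nabla_\beta\Psi = 0$ is geometrically invariant, $\phi_p$ re-expressed in $q$-coordinates yields, for small $t$ and by the implicit function theorem, a function $\tilde\phi$ satisfying the same quasilinear equation as $\phi_q$ with the same Cauchy data. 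Uniqueness then forces $\tilde\phi = \phi_q$ on the overlap, so the $M_p$ glue consistently into a smooth 3-manifold $M$.

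The main obstacle is this uniqueness/patching step, since uniqueness is not explicit in Proposition~\ref{prop25}. I would prove it by subtracting the equations satisfied by two solutions with matching Cauchy data: because $g^{\alpha\beta}(\partial\phi_q) - g^{\alpha\beta}(\partial\tilde\phi) = O(\partial(\tilde\phi - \phi_q))$ uniformly on the compact overlap cone, the difference $\tilde\phi - \phi_q$ solves a linear hyperbolic equation of the form handled in Lemma~\ref{lem:apriori} with zero initial data, and a Gr\"onwall argument kills the difference. Combined with the finite speed of propagation built into the cones $D_{T_p}$, this justifies the gluing; every other aspect of the theorem is bookkeeping from the already established linear and nonlinear Cauchy theory, with the time of existence $T>0$ taken uniform on any relatively compact portion of $S$ by inspection of the dependencies in Proposition~\ref{prop25}.
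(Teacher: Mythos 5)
Your proposal is correct and is essentially the paper's own argument, which is condensed into the single sentence ``In local coordinates this becomes the problem above, and the solution has to be unique in overlapping coordinate systems.'' You have merely filled in the details the paper leaves implicit: the reduction of $(S,\kappa)$ to admissible graph data $(f_p,h_p)$ via a rigid motion, the verification that $|\kappa|<1$ translates into the timelike/hyperbolicity condition $1+|\nabla f_p|^2-h_p^2\ge 2\varepsilon_p(1+|\nabla f_p|^2)$ of Proposition~\ref{prop25}, and the uniqueness-in-overlaps step proved by subtracting two solutions and applying the energy estimate of Lemma~\ref{lem:apriori} together with finite speed of propagation.
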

\begin{proof} In local coordinates this becomes the problem above, and the solution has to be unique
in overlapping coordinate systems.
\end{proof}

For completeness' sake, we also shortly discuss a different coordinate system next:

\subsection{Cylindrical coordinates}
The gradient expressed in cylindrical coordinates in space $(t,r,z,\theta)$ is
$$
\nabla\Psi=-\Psi_t\partial_t+\Psi_r\partial_r
+r^{-1}\Psi_\theta\partial_\theta+\Psi_z\partial_z.
$$
The divergence in cylindrical coordinates is
$$
\nabla\cdot\bold{N}=\partial_t N_t+r^{-1}\partial_r (r N_r)+\partial_z
N_z+r^{-1}\partial_\theta N_\theta,
$$
which expressed in terms of $\Psi$ is
\begin{equation*}
\frac{\partial}{\partial t}\big[\frac{\Psi_t}{\sqrt{L}}\big] -
\frac{1}{r}\frac{\partial}{\partial r}\big[\frac{r\Psi_r}{\sqrt{L}}\big]
-\frac{1}{r^2}\frac{\partial}{\partial
\theta}\big[\frac{\Psi_\theta}{\sqrt{L}}\big]
-\frac{\partial}{\partial z}\big[\frac{\Psi_z}{\sqrt{L}}\big]=0,\qquad
L=|\nabla\Psi|
\end{equation*}

With $\Psi(t,x,y,z)=z-\phi(t,x,y)$ this reduces to
\eqref{eq:HypMin} expressed in cylindrical coordinates:
\begin{equation*}
\frac{\partial}{\partial t}\big[\frac{\phi_t}{\sqrt{L}}\big] -
\frac{1}{r}\frac{\partial}{\partial r}\big[\frac{r\phi_r}{\sqrt{L}}\big]
-\frac{1}{r^2}\frac{\partial}{\partial
\theta}\big[\frac{\phi_\theta}{\sqrt{L}}\big]=0,\qquad
L=1+\phi_r^2+\phi_\theta^2/r^2-\phi_t^2
\end{equation*}
However, we will rewrite our surface with $\Psi=r-\psi(t,z,\theta)$:
\begin{equation}\label{eq:cylindricalminimal}
\frac{\partial}{\partial t}\big[\frac{\psi_t}{\sqrt{L}}\big] -
\frac{1}{r}\frac{\partial}{\partial r}\big[\frac{r}{\sqrt{L}}\big]
-\frac{1}{r^2}\frac{\partial}{\partial
\theta}\big[\frac{\psi_\theta}{\sqrt{L}}\big]
-\frac{\partial}{\partial z}\big[\frac{\psi_z}{\sqrt{L}}\big]=0,\qquad
L=1+ \psi_z^2+\psi_\theta^2/r^2 - \psi_t^2
\end{equation}

\begin{lemma} The above equation takes they form
\begin{equation}
\widetilde{g}^{\,\alpha\beta}(\partial\psi)\partial_\alpha\partial_\beta \psi =\frac{1}{r}\big(1+ \psi_z^2+\frac{2}{r^2}\psi_\theta^2 - \psi_t^2\big),
\end{equation}
where the principal symbol is in the coordinates $(\partial_t,\partial_z,\partial_\theta/r)$ replaced by $(\tau,\zeta,\eta)$ is
\begin{equation*}
\widetilde{g}^{\,\alpha\beta}(\partial\psi)\xi_\alpha\xi_\beta
=(L+\psi_t^2)\tau^2 -(L-\psi_z^2)\zeta^2-\big(L-\tfrac{1}{r^2} \psi_\theta^2\big)\eta^2-2\psi_t\psi_t \tau\zeta
-2\psi_t\tfrac{1}{r}\psi_\theta \tau\eta+2\psi_z\tfrac{1}{r}\psi_\theta \zeta\eta .
\end{equation*}
We have
\begin{equation}
(L+\psi_t^2)^{-1} \widetilde{g}^{\,\alpha\beta}(\partial\psi)\xi_\alpha\xi_\beta
=(\tau-C\zeta-D\eta)^2 -\widetilde{\gamma}(\xi,\eta),
\end{equation}
where $\widetilde{\gamma}(\zeta,\eta)=c_0 \zeta^2+c_1\eta^2+c_2 \zeta\eta$ is positive definite if
\begin{equation}\label{eq:localexist1}
\psi>0,\qquad 1+\frac{1}{r^2}\psi_\theta^2+\psi_z^2-\psi_t^2>0.
\end{equation}
\end{lemma}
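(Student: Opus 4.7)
The plan is to expand the divergence form \eqref{eq:cylindricalminimal} term by term, multiply through by $L^{3/2}$ to clear the $1/\sqrt{L}$ factors, and then (i) read off the principal symbol and inhomogeneity, and (ii) verify positive definiteness of $\tilde{\gamma}$ through a clean algebraic identity rather than by applying the Sylvester criterion to the unwieldy coefficients $c_0, c_1, c_2$ directly.

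For step (i) I would apply the chain rule via $\partial_\mu(\psi_\mu/\sqrt{L}) = \psi_{\mu\mu}/\sqrt{L} - \psi_\mu L_\mu/(2L^{3/2})$ for $\mu\in\{t,z,\theta\}$, using $\tfrac12 L_\mu = \psi_z\psi_{z\mu} + r^{-2}\psi_\theta\psi_{\theta\mu} - \psi_t\psi_{t\mu}$ together with the single non-trivial radial identity $\tfrac12 L_r = -\psi_\theta^2/r^3$. After multiplication by $L^{3/2}$ the four terms of \eqref{eq:cylindricalminimal} will contribute, respectively, $(L+\psi_t^2)\psi_{tt} - \psi_t\psi_z\psi_{tz} - r^{-2}\psi_t\psi_\theta\psi_{t\theta}$; $-L/r - \psi_\theta^2/r^3$; $-r^{-2}(L-r^{-2}\psi_\theta^2)\psi_{\theta\theta} + r^{-2}\psi_z\psi_\theta\psi_{z\theta} - r^{-2}\psi_t\psi_\theta\psi_{t\theta}$; and $-(L-\psi_z^2)\psi_{zz} + r^{-2}\psi_z\psi_\theta\psi_{z\theta} - \psi_z\psi_t\psi_{tz}$. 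Summing and using equality of mixed partials produces precisely the stated principal part $\tilde{g}^{\alpha\beta}(\partial\psi)\partial_\alpha\partial_\beta\psi$ once one identifies $\tau,\zeta,\eta$ with the symbols of $\partial_t,\partial_z,r^{-1}\partial_\theta$ (so $\psi_{\theta\theta}/r^2\leftrightarrow\eta^2$, $\psi_{t\theta}/r\leftrightarrow\tau\eta$, $\psi_{z\theta}/r\leftrightarrow\zeta\eta$); the zeroth-order remainder collapses to $\frac{1}{r}(L+r^{-2}\psi_\theta^2) = \frac{1}{r}(1+\psi_z^2 + 2r^{-2}\psi_\theta^2 - \psi_t^2)$, giving the right-hand side of the claim.

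For step (ii) I would divide the symbol by $L+\psi_t^2$ and complete the square in $\tau$; this forces $C=\psi_t\psi_z/(L+\psi_t^2)$ and $D=\psi_t\psi_\theta/(r(L+\psi_t^2))$, and identifies the remainder as $\tilde{\gamma}(\zeta,\eta)$. The crucial observation, which I would verify by direct expansion using the telescoping identities $L+\psi_t^2-\psi_z^2 = 1+r^{-2}\psi_\theta^2$ and $L+\psi_t^2-r^{-2}\psi_\theta^2 = 1+\psi_z^2$, is the compact identity
\[
(L+\psi_t^2)^2\,\tilde{\gamma}(\zeta,\eta) \;=\; L\Big[(1+r^{-2}\psi_\theta^2)\zeta^2 \;-\; \tfrac{2\psi_z\psi_\theta}{r}\zeta\eta \;+\; (1+\psi_z^2)\eta^2\Big].
\]
The bracketed quadratic form has determinant $(1+r^{-2}\psi_\theta^2)(1+\psi_z^2) - r^{-2}\psi_z^2\psi_\theta^2 = 1+\psi_z^2+r^{-2}\psi_\theta^2 = L+\psi_t^2$, which is strictly positive whenever $r = \psi > 0$, and obviously positive trace; hence it is positive definite on $\{\psi>0\}$, and $\tilde{\gamma}$ itself is positive definite precisely when the second hyperbolicity condition $L>0$ of \eqref{eq:localexist1} holds as well.

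The main obstacle is pure bookkeeping rather than conceptual: the asymmetric appearance of $1/r$ factors and the dependence of $L$ on $r$ through $\psi_\theta^2/r^2$ make the chain-rule expansion in step (i) delicate, in particular in the $r$-derivative contribution, where the first-order term $-\psi_\theta^2/r^3$ combines with $-L/r$ to generate the factor $2$ in front of $\psi_\theta^2/r^2$ on the right-hand side. The clean identity for $(L+\psi_t^2)^2\tilde{\gamma}$ in step (ii) is the one non-routine algebraic observation of the proof, and is what bypasses a messy Sylvester-type verification on the coefficients $c_0, c_1, c_2$.
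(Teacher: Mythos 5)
Your computation is correct, and the method coincides with the paper's: multiply the divergence form by $L^{3/2}$, collect second-order terms, read off the principal symbol in the coordinates $(\tau,\zeta,\eta)$, divide by $M := L+\psi_t^2$ (which is exactly the paper's $M=1+\psi_z^2+r^{-2}\psi_\theta^2$), and complete the square in $\tau$. The paper then verifies positive definiteness of $\widetilde\gamma$ by a Sylvester-type check, computing $M^2(A+C^2)$, $M^2(B+D^2)$, and $M^4(E-CD)^2$ separately and only combining them at the end to get $(A+C^2)(B+D^2)-(E-CD)^2 > 0$. Your step (ii) streamlines this by observing the single factorization
\[
(L+\psi_t^2)^2\,\widetilde{\gamma}(\zeta,\eta)
= L\Big[(1+\tfrac{1}{r^2}\psi_\theta^2)\zeta^2 - \tfrac{2}{r}\psi_z\psi_\theta\,\zeta\eta + (1+\psi_z^2)\eta^2\Big],
\]
which I have verified from the same expressions $M^2(A+C^2)=(1+r^{-2}\psi_\theta^2)L$, $M^2(B+D^2)=(1+\psi_z^2)L$, $M^2(E-CD)=L\psi_z\psi_\theta/r$ that the paper computes; the bracketed form has determinant exactly $M$ and positive trace, so it is positive definite for $\psi=r>0$, and $\widetilde\gamma$ inherits positivity iff $L>0$. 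This is the same content as the paper's three-minor computation but packaged as one identity, which makes the logical structure (a common factor of $L$ and a residual form with determinant $M$) more transparent. Both proofs are correct; yours is slightly more economical in the final step.
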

\begin{proof}
Simplifying gives
\begin{equation*}
 L\big(\psi_{tt}-\psi_{zz}-\frac{1}{r^2}\psi_{\theta\theta}-\frac{1}{r}\big)
-\frac{1}{2}\big(\psi_t L_t-L_r-\psi_z L_z -\frac{1}{r^2}\psi_\theta
L_\theta\big)=0,
\end{equation*}
or
\begin{multline*}
\big(1+ \psi_z^2+\psi_\theta^2/r^2 - \psi_t^2\big)
\big(\psi_{tt}-\psi_{zz}-\frac{1}{r^2}\psi_{\theta\theta}-\frac{1}{r}\big)
-\frac{1}{r^3}\psi_\theta^2=\\
\psi_t(\psi_z
\psi_{tz}+\frac{1}{r^2}\psi_{\theta}\psi_{t\theta}-\psi_t\psi_{tt}\big)
 -\psi_z
 \big(\psi_z\psi_{zz}+\frac{1}{r^2}\psi_\theta\psi_{z\theta}-\psi_t\psi_{zt}\big)
 -\frac{1}{r^2}\psi_\theta\big(\psi_z\psi_{\theta z}+\frac{1}{r^2}\psi_\theta
 \psi_{\theta\theta}-\psi_t\psi_{\theta t}\big).
\end{multline*}
 We have
\begin{multline*}
\big(1+ \psi_z^2+\frac{1}{r^2}\psi_\theta^2\big)\psi_{tt}
-\big(1+\frac{1}{r^2}\psi_\theta^2+\psi_z^2-\psi_t^2\big)\big(\psi_{zz}+\frac{1}{r^2}\psi_{\theta\theta}\big)\\
=2\psi_t\big(\psi_z
\psi_{tz}
+\frac{1}{r^2}\psi_{\theta}\psi_{t\theta}\big)
 -\big(\psi_z^2\psi_{zz}+\frac{1}{r^4}\psi_\theta^2\psi_{\theta\theta}+ \frac{2}{r^2}\psi_\theta \psi_z\psi_{\theta z}\big)
 +\frac{1}{r}\big(1+ \psi_z^2+\frac{2}{r^2}\psi_\theta^2 - \psi_t^2\big).
\end{multline*}
Replacing $(\partial_t,\partial_z,\partial_\theta/r)$ by $(\tau,\zeta,\eta)$
and dividing by $M=\big(1+ \psi_z^2+\frac{1}{r^2}\psi_\theta^2\big)$ we get the
characteristic polynomial:
$$
\tau^2-A\zeta^2-B\eta^2-2C \tau\zeta-2D\tau\eta+2E\zeta\eta,
$$
where
$$
A=\frac{1+\frac{1}{r^2}\psi_\theta^2-\psi_t^2}{M},\quad
B=\frac{1+\psi_z^2 -\psi_t^2}{M}\quad
C=\frac{\psi_t\psi_z}{M},\quad D=\frac{\psi_t\psi_{\theta}}{Mr},\quad E=\frac{\psi_\theta \psi_z}{Mr}
$$
Completing the squares we get
$$
(\tau-C\zeta-D\eta)^2-\big( (A+C^2)\zeta^2+(B+D^2)\eta^2-2(E-CD)\zeta\eta\big)
$$
This satisfies Garding's hyperbolicity condition if
the last polynomial is positive definite, i.e. if
$$
A+C^2>0,\quad B+D^2>0,\quad\text{and}\quad (A+C^2)(B+D^2)> (E-CD)^2
$$
We have
$$
M^2(A+C^2)=\big(1+ \psi_z^2+\frac{1}{r^2}\psi_\theta^2\big)
\big(1+\frac{1}{r^2}\psi_\theta^2-\psi_t^2\big)+\psi_t^2\psi_z^2
=(1+\frac{1}{r^2}\psi_\theta^2\big)
\big(1+\frac{1}{r^2}\psi_\theta^2+\psi_z^2-\psi_t^2)
$$
and
$$
M^2(B+D^2)=(1+\psi_z^2\big)
\big(1+\frac{1}{r^2}\psi_\theta^2+\psi_z^2-\psi_t^2)
$$
Moreover
$$
M^4( E-CD)^2=\big(1+ \psi_z^2+\frac{1}{r^2}\psi_\theta^2-\psi_t^2\big)^2\psi_z^2\frac{1}{r^2}\psi_\theta^2
$$
Hence
\begin{multline}
M^4 (A+C^2)(B+D^2)-M^4 (E-CD)^2=\\ \big(1+\psi_z^2+\frac{1}{r}\psi_\theta^2\big)
\big(1+\frac{1}{r^2}\psi_\theta^2+\psi_z^2-\psi_t^2)^2> 0,
\end{multline}
which proves the lemma.
\end{proof}

Hence in the case $\psi$ is independent of $\theta$
 \eqref{eq:cylindricalminimal} is hyperbolic as long as
\begin{equation}\label{eq:hyperbolic}
1+\psi_z^2-\psi_t^2>0,\qquad\text{and}\qquad \psi>0.
\end{equation}
In our case we are looking at a small perturbation $w$ of $\cosh{z}$:
$$
\psi=\cosh{z}+w
$$
If initial data $(w,w_t)|_{t=0}$ are small then \eqref{eq:hyperbolic} will hold,
 and local existence follows.

\section{Stability for perturbations away from the collar}
We study here radial perturbations of the static catenoid solution to the hyperbolic vanishing mean curvature flow which are supported far away from the 'collar' of the catenoid. We show that under a universal smallness assumption, a large class of such perturbations leads to solutions which exist 'until the perturbation reaches the collar'. Thus for these solutions any potential instability only sets in once the 'collar starts to move'. In the sequel all functions are of the form $f(t, r)$, $r = |x|$.

\begin{theorem} Let $Q(r) = \log[r+\sqrt{r^2-1}]$ the catenoid solution in polar coordinates, and consider perturbations at time $t=0$ of the form
\[
(\eps, \eps_t)|_{t=0} = \big(f(\frac{\cdot}{\lambda}),\,\lambda^{-1}g(\frac{\cdot}{\lambda})\big)
\]
for any $\lambda>1$, where $(f, g)\in C_0^\infty(1,2)$ and  we make the smallness assumption
\[
\sum_{1\leq\alpha\leq N}\|\partial_r^{\alpha}f(r)\|_{L^2_{rdr}} + \sum_{0\leq\alpha\leq N-1}\|\partial_r^{\alpha}g(r)\|_{L^2_{rdr}}<\kappa_0
\]
where $N\geq 10$ and $\kappa_0$ is sufficiently small, independent of $\lambda$. Then the solution $\phi(t, r) = Q(r)+\eps(t, r)$ with initial data
\[
(Q+\eps, \eps_t)|_{t=0}
\]
exists at least on the time interval $[0, \lambda - C_1]$ where $C_1$ is a universal constant (independent of the other parameters).
\end{theorem}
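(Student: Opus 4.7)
The key observation driving the proof is that the linearized minimal surface operator around $Q$ has a natural radial characteristic speed $c(r)=(1+Q'(r)^2)^{-1/2}=\sqrt{(r^2-1)/r^2}$, which vanishes as $r\to 1^+$. Consequently, the inner characteristic starting at $r=\lambda$ satisfies $\sqrt{r(t)^2-1}=\sqrt{\lambda^2-1}-t$, and reaches $r\approx C_1$ exactly at $t\approx \lambda-C_1$; this is the lifespan in the theorem. Setting $\phi=Q+\eps$ and using the static equation $g^{\alpha\beta}(\partial Q)\partial_\alpha\partial_\beta Q=0$ one derives the quasilinear wave equation
\[
g^{\alpha\beta}(\partial Q+\partial\eps)\,\partial_\alpha\partial_\beta\eps \;=\; F(\eps;Q)\;:=\;\big[g^{\alpha\beta}(\partial Q)-g^{\alpha\beta}(\partial Q+\partial\eps)\big]\,\partial_\alpha\partial_\beta Q,
\]
to which Proposition~\ref{prop25} supplies short-time well-posedness.

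The plan is then a bootstrap argument on a maximal interval $[0,T^*]\subset[0,\lambda-C_1]$, under the scaled ansatz
\[
\mathcal{N}_N(t)\;:=\;\sum_{|\gamma|\leq N}\lambda^{|\gamma|-1}\,\|\partial^\gamma\eps(t)\|_{L^2_{rdr}(A_t)}\;\leq\;C_*\kappa_0,
\]
where the weights $\lambda^{|\gamma|-1}$ match the scaling of the initial data (so that $\mathcal{N}_N(0)\lesssim \kappa_0$ by hypothesis), and $A_t$ is the annular region bounded by the inner and outer characteristics of $g^{\alpha\beta}(\partial Q)$ emanating from $r=\lambda,2\lambda$. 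Sobolev embedding on $A_t$ upgrades the bootstrap to $\|\partial^\gamma\eps\|_{L^\infty(A_t)}\lesssim C_*\kappa_0\lambda^{-|\gamma|}$ for $|\gamma|\leq N-2$, which in turn (i) verifies the hyperbolicity \eqref{eq:spacelike} for $\phi=Q+\eps$, and (ii) shows that the quasilinear correction to the characteristic cones is $O(\kappa_0/\lambda)$, so that the confinement statement $\mathrm{supp}\,\eps(t,\cdot)\subset A_t$ (obtained from the positivity of the flux $H^{\alpha\beta}\xi_\alpha\xi_\beta$ proved in the energy lemma preceding Lemma~\ref{lem:apriori}) propagates throughout the bootstrap interval with the inner edge staying above $C_1/2$.

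Closing the bootstrap rests on applying Lemma~\ref{lem:apriori} to the commuted equations for $\partial_x^\gamma\eps$. The exponential growth factor $\exp\!\big(\int_0^{T^*}C_\eps n(s)\,ds\big)$ is controlled by $n(s)\lesssim\sup_{A_s}(|\partial^2 Q|+|\partial^2\eps|)$, and the decisive estimate is that $|Q''(r)|=r(r^2-1)^{-3/2}$ is monotone decreasing and integrable on $[C_1,\infty)$: changing variables along the inner characteristic via $dr/dt=-c(r)$,
\[
\int_0^{T^*}\!\sup_{A_s}|Q''|\,ds\;\leq\;\int_0^{T^*}|Q''(r_{in}(s))|\,ds\;=\;\int_{r_{in}(T^*)}^{\lambda}\frac{|Q''(r)|}{c(r)}\,dr\;\lesssim_{C_1}\;\int_{C_1}^\infty|Q''(r)|\,dr\;=\;O(1),
\]
uniformly in $\lambda$. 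The $\partial^2\eps$-part is bounded by $\int_0^{T^*}\|\partial^2\eps\|_{L^\infty}\,ds\lesssim (\lambda-C_1)\cdot C_*\kappa_0/\lambda^2\lesssim C_*\kappa_0/\lambda$, also uniform in $\lambda$. The inhomogeneity $F$ is a product of $\partial^2 Q$ with a polynomial in $\partial\eps$ vanishing at $\eps=0$, so the same integrability of $Q''$ combined with $\|\partial\eps\|_{L^\infty}\lesssim C_*\kappa_0/\lambda$ gives $\int_0^{T^*}\|F\|_{H^{N-1}(A_s)}\,ds\lesssim C_*\kappa_0$. Choosing $\kappa_0$ small and $C_*$ large (both absolute) improves $\mathcal{N}_N(t)\leq (C_*/2)\kappa_0$, and a standard continuity argument extends the solution to $t=\lambda-C_1$; higher regularity is preserved by repeating the estimate at higher derivative order.

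The main obstacle is getting both the $\lambda-C_1$ lifespan and the $\lambda$-independent smallness to come out of the same argument. Both hinge on the single structural fact that along the inner characteristic the measure $c(r)^{-1}\,dr$ pulls back to $dt$, converting a long time integral of the spatially supremized background curvature into the bounded integral $\int_{C_1}^\infty|Q''(r)|\,dr$. The secondary technical point is to propagate the $\lambda^{-k}$-smallness of $\|\partial^k\eps\|_{L^\infty}$ (encoded in the weighted bootstrap norm) over the long time interval $[0,\lambda-C_1]$; without the correct scaling of the norms, the $\partial^2\eps$ contribution to $n(s)$ would integrate to $O(\lambda\kappa_0)$ and destroy the $\lambda$-uniformity of the Gr\"onwall factor.
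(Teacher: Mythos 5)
Your proposal takes a genuinely different route from the paper. The paper works with the flat d'Alembertian $\Box=\partial_t^2-\partial_r^2-r^{-1}\partial_r$, commutes with Klainerman's vector fields $\Gamma_1,\Gamma_2$, exploits the null-form structure of the quadratic nonlinearity, and runs a three-tier bootstrap in which the $L^2$-norms are allowed to grow like $\langle t\rangle^\delta$ and the crucial $L^\infty$-decay $\langle t\rangle^{-1/2}$ is obtained separately from a d'Alembert parametrix; the argument never forms a Gr\"onwall exponential but instead absorbs all error integrals using the extra power of $K\kappa_0$ coming from the at-least-cubic interactions. Your approach is a direct Gr\"onwall-style application of Lemma~\ref{lem:apriori} with a $\lambda$-scaled bootstrap norm, and its genuinely interesting ingredient is the observation that $\int_0^{T^*}\sup_{A_s}|Q''|\,ds$ is $O(1)$ uniformly in $\lambda$ by pulling the time integral back along the inner edge of the support. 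That observation is correct (and in fact even the crude flat cone $r\geq \lambda-s$ gives it, since $\int_{C_1}^\infty |Q''|\,dr<\infty$), but the rest of the scheme has a genuine gap.

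The gap is that the $\lambda$-weighted bootstrap $\mathcal N_N(t)=\sum\lambda^{|\gamma|-1}\|\partial^\gamma\eps(t)\|_{L^2_{rdr}}\leq C_*\kappa_0$ is not propagated by the equation, because the background $Q$ is not scale-invariant. Concretely, in the $\gamma$-fold commuted equation for $\eps$ there appear source terms of the schematic form $(\partial^{\gamma}\partial^2 Q)\,\partial\eps\sim r^{-|\gamma|-3}\,\partial\eps$ (all derivatives falling on the $Q$-coefficient). Using the support bound $r\gtrsim\lambda-s$ and the bootstrap itself, $\|\partial\eps\|_{L^2_{rdr}}\lesssim\kappa_0$, the Duhamel integral of this term from $s=0$ to $s=t\to\lambda-C_1$ produces a contribution to $\|\partial^{|\gamma|+1}\eps\|_{L^2_{rdr}}$ of order $\kappa_0\int_0^{\lambda-C_1}(\lambda-s)^{-|\gamma|-3}\,ds\sim\kappa_0\,C_1^{-|\gamma|-2}$, which is independent of $\lambda$. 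For $\lambda\gg C_1$ this exceeds the postulated $C_*\kappa_0\,\lambda^{-|\gamma|}$ by a factor $(\lambda/C_1)^{|\gamma|}/C_1^2\gg 1$ as soon as $|\gamma|\geq 1$. In words: as the perturbation contracts toward the collar, the rapidly varying $Q$-coefficients pump $O(\kappa_0)$-sized energy into the high derivatives of $\eps$, so the $\lambda^{1-k}$-scaling of $\|\partial^k\eps\|_{L^2}$ is destroyed. Relatedly, the Sobolev step claiming $\|\partial^\gamma\eps\|_{L^\infty(A_t)}\lesssim C_*\kappa_0\lambda^{-|\gamma|}$ is false near the inner edge: the radial Sobolev embedding gives $|\partial^\gamma\eps(r)|\lesssim r^{-1/2}\|\partial^\gamma\eps\|_{L^2}^{1/2}\|\partial^{\gamma+1}\eps\|_{L^2}^{1/2}\lesssim r^{-1/2}\kappa_0\lambda^{1/2-|\gamma|}$, which at $r\sim C_1$ is $\kappa_0\lambda^{1/2-|\gamma|}$, a factor $\lambda^{1/2}$ too large; the claimed bound only holds where $r\gtrsim\lambda$. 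The upshot is that the Gr\"onwall exponent $\int_0^{T^*}\sup_{A_s}|\partial^2\eps|\,ds$ cannot be bounded uniformly in $\lambda$ by this route, and the scheme only yields a $\lambda$-dependent smallness $\kappa_0\lesssim\lambda^{-1/2}$ or worse, which is weaker than the theorem's statement. A variant of your argument in which the weight $\lambda^{|\gamma|-1}$ is replaced by the time-dependent weight $(\lambda-t)^{|\gamma|-1}$ might be salvageable (the Duhamel integral is then controlled, and $\int\sup|\partial^2\eps|\,ds\lesssim\kappa_0\int(\lambda-s)^{-2}\,ds\lesssim\kappa_0/C_1$), but that is not what is written, and this modification would need to be carried out and checked; as stated, the bootstrap does not close.
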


\begin{proof} We use Klainerman's method of commuting vector fields. Thus introduce the family of operators
\[
\Gamma_2 = t\partial_t + r\partial_r,\,\Gamma_1 = t\partial_{r} + r\partial_t,\,i=1,2
\]
We note that
\[
\sum_{1\leq \alpha\leq N-1}\|(\partial_r^\alpha\Gamma\eps)|_{t=0}\|_{L^2_{rdr}}\lesssim \kappa_0
\]
where $\Gamma$ stands for any one of the above vector fields. In light of Proposition~\ref{prop25}, the key will be the following
\begin{prop} Let $\delta>0$ small enough. Then provided $\kappa_0 = \kappa_0(\delta)>0$ is small enough, there exists a universal constant $K$ with the following property: assume that for any $T\in [0, \lambda-C_1]$, we have
\[
\sum_{1\leq|\alpha|\leq N}\|\la t\ra^{-\delta}\partial_{t,r}^{\alpha}\eps(t, \cdot)\|_{L_t^\infty L^2_{rdr}([0,T]\times \R_{+})}\leq K\kappa_0,
\]
\[
\sum_{1\leq|\alpha|\leq N-1}\sum_{\Gamma}\|\la t\ra^{-\delta}\partial_{t,r}^{\alpha}\Gamma\eps(t, \cdot)\|_{L_t^\infty L^2_{rdr}([0,T]\times \R_{+})}\leq K\kappa_0,
\]
\[
\sum_{1\leq|\alpha|\leq \frac{N}{2}+2}\|\langle t\rangle^{\frac{1}{2}}\partial_{t,r}^{\alpha}\eps(t, \cdot)\|_{L_{t,r}^\infty([0,T]\times \R_{+})}\leq K\kappa_0,
\]
Here $\partial_{t,r}^{\alpha}$ denotes all operators of the form $\partial_t^{\alpha_1}\partial_r^{\alpha_2}$, $\alpha_1 + \alpha_2 = \alpha$. Then one may replace $K$ by $\frac{K}{2}$ on the right hand side.
\end{prop}
\begin{proof}(Proposition) Write $\Box = \partial_t^2 - \partial_r^2 - \frac{1}{r}\partial_r$ and let $\phi(t, r) = Q(r)+\eps(t, r)$. We first derive the equation for $\eps$:
\begin{align*}
&\Box \eps - \triangle Q = \\
&\sqrt{1+|\nabla_x\phi|^2 - \eps_t^2}\big[-\eps_t\partial_t\big(\frac{1}{\sqrt{1+|\nabla_x\phi|^2 - \eps_t^2}}\big) + \sum_{i=1,2}\phi_{x_i}\partial_{x_i}\big(\frac{1}{\sqrt{1+|\nabla_x\phi|^2 - \eps_t^2}}\big)\big]
\end{align*}
We reformulate this as
\begin{align}
\Box \eps = &\sqrt{1+|\nabla_x\phi|^2 - \eps_t^2}\big[-\eps_t\partial_t\big(\frac{1}{\sqrt{1+|\nabla_x\phi|^2 - \eps_t^2}} - \frac{1}{\sqrt{1+|\nabla_xQ|^2}}\big)\nonumber\\
&\hspace{2.5cm}+\sum_{i=1,2}\eps_{x_i}\partial_{x_i}\big(\frac{1}{\sqrt{1+|\nabla_x\phi|^2 - \eps_t^2}} - \frac{1}{\sqrt{1+|\nabla_xQ|^2}}\big)\label{eq:epseqnullform}\\
&\hspace{2.5cm}+\sum_{i=1,2}Q_{x_i}\partial_{x_i}\big(\frac{1}{\sqrt{1+|\nabla_x\phi|^2 - \eps_t^2}} - \frac{1}{\sqrt{1+|\nabla_x Q|^2}}\big)\label{eq:epseqerror1}\\
&\hspace{2.5cm}+\sum_{i=1,2}\eps_{x_i}\partial_{x_i}\big(\frac{1}{\sqrt{1+Q_r^2}}\big)\label{eq:epseqerror11}\big]\\
&+(\sqrt{1+|\nabla_x\phi|^2 - \eps_t^2} - \sqrt{1+|\nabla_x Q|^2})\sum_{i=1,2}Q_{x_i}\partial_{x_i}[\frac{1}{\sqrt{1+|\nabla_x Q|^2}}]\label{eq:epseqerror2}
\end{align}
where we have exploited the fact that $Q$ is a static solution, i. e.
\[
\frac{\triangle Q}{\sqrt{1+|\nabla_x Q|^2}} = -\sum_{i=1,2}Q_{x_i}\partial_{x_i}\big(\frac{1}{\sqrt{1+|\nabla_x Q|^2}}\big)
\]
As all functions are radial, we compute
\begin{align*}
&\sum_{i=1,2}Q_{x_i}\partial_{x_i}\big(\frac{1}{\sqrt{1+|\nabla_x\phi|^2 - \eps_t^2}} - \frac{1}{\sqrt{1+|\nabla_x Q|^2}}\big)\\& = -\frac{Q_r}{2}\big[\frac{\partial_r[(Q_r)^2 + 2Q_r\eps_r + \eps_r^2 - \eps_t^2}{(1+|\nabla_x\phi|^2 - \eps_t^2)^{\frac{3}{2}}} - \frac{\partial_r(Q_r^2)}{(1+|\nabla_x Q|^2)^{\frac{3}{2}}}\big]\\
& = O(\frac{1}{r^4})\big[\frac{1}{(1+|\nabla_x\phi|^2 - \eps_t^2)^{\frac{3}{2}}} - \frac{1}{(1+Q_r^2)^{\frac{3}{2}}}\big]\\
&+O(\frac{1}{r^2})\frac{\eps_{rr} + O(\frac{1}{r})\eps_r}{(1+|\nabla_x\phi|^2 - \eps_t^2)^{\frac{3}{2}}} + O(\frac{1}{r})\frac{\eps_r\eps_{rr} - \eps_t\eps_{tr}}{(1+|\nabla_x\phi|^2 - \eps_t^2)^{\frac{3}{2}}}
\end{align*}
Here the expressions $O(\frac{1}{r^k})$ depend only on $r$ and have symbol type behavior. \\
Further, since $\partial_r\big(\frac{1}{\sqrt{1+Q_r^2}}\big) =O(\frac{1}{r^3})$, we have
\[
\sum_{i=1,2}\eps_{x_i}\partial_{x_i}\big(\frac{1}{\sqrt{1+Q_r^2}}\big) = O(\frac{1}{r^3})\eps_r
\]
We then formulate the equation for $\eps$ as
\begin{align}
\Box\eps =&\sqrt{1+|\nabla_x\phi|^2 - \eps_t^2}\big[-\eps_t\partial_t\big(\frac{1}{\sqrt{1+|\nabla_x\phi|^2 - \eps_t^2}} - \frac{1}{\sqrt{1+|\nabla_xQ|^2}}\big)\nonumber\\
&\hspace{2.5cm}+\sum_{i=1,2}\eps_{x_i}\partial_{x_i}\big(\frac{1}{\sqrt{1+|\nabla_x\phi|^2 - \eps_t^2}} - \frac{1}{\sqrt{1+|\nabla_xQ|^2}}\big)\label{eq:epseqnnullform}\\
&\hspace{2.5cm}+ O(\frac{1}{r^2})\frac{\eps_{rr} + O(\frac{1}{r})\eps_r}{(1+|\nabla_x\phi|^2 - \eps_t^2)^{\frac{3}{2}}} + O(\frac{1}{r})\frac{\eps_r\eps_{rr} - \eps_t\eps_{tr}}{(1+|\nabla_x\phi|^2 - \eps_t^2)^{\frac{3}{2}}}\label{eq:epseqnerrors1}\\
&\hspace{2.5cm}+O(\frac{1}{r^3})\eps_r +O(\frac{1}{r^4})\big[\frac{1}{(1+|\nabla_x\phi|^2 - \eps_t^2)^{\frac{3}{2}}} - \frac{1}{(1+Q_r^2)^{\frac{3}{2}}}\big]\big]\label{eq:epseqnerrors2}\\
&+(\sqrt{1+|\nabla_x\phi|^2 - \eps_t^2} - \sqrt{1+|\nabla_x Q|^2})\sum_{i=1,2}Q_{x_i}\partial_{x_i}[\frac{1}{\sqrt{1+|\nabla_x Q|^2}}]\label{eq:epseqnerrors3}
\end{align}
{\bf{Energy estimates:}} We start by bootstrapping the bound
\begin{equation}\label{eq:en}
\sum_{1\leq\alpha\leq N}\|\partial_{t,r}^{\alpha}\eps(t, \cdot)\|_{L_t^\infty L^2_{rdr}([0,T]\times \R_{+})}\leq K\kappa_0\la t\ra^{\delta}
\end{equation}
Thus let $D^{\alpha} = \partial_t^{\beta_1}\partial_{r}^{\beta_2}$, with $1\leq \sum\beta_i= |\alpha|\leq N-1$.
Write schematically (we suppress constant coefficients)
\begin{align}
&\Box (D^{\alpha}\eps) + [D^{\alpha}, \Box]\eps\label{eq:bigmess}\\& = \sum_{\sum\alpha_i = \alpha}D^{\alpha_1}(\sqrt{1+|\nabla_x\phi|^2 - \eps_t^2})\big[-D^{\alpha_2}(\eps_t)\partial_t D^{\alpha_3}\big(\frac{1}{\sqrt{1+|\nabla_x\phi|^2 - \eps_t^2}} - \frac{1}{\sqrt{1+|\nabla_xQ|^2}}\big)\nonumber\\
&+\sum_{i=1,2}D^{\alpha_2}(\eps_{x_i})D^{\alpha_3}\partial_{x_i}\big(\frac{1}{\sqrt{1+|\nabla_x\phi|^2 - \eps_t^2}} - \frac{1}{\sqrt{1+|\nabla_xQ|^2}}\big)\label{eq:ennullform}\\
&+ D^{\alpha_2}\big(O(\frac{1}{r^2})\big) D^{\alpha_3}\big(\frac{\eps_{rr} + O(\frac{1}{r})\eps_r}{(1+|\nabla_x\phi|^2 - \eps_t^2)^{\frac{3}{2}}}\big) + D^{\alpha_2}\big(O(\frac{1}{r})\big)D^{\alpha_3}\big(\frac{\eps_r\eps_{rr} - \eps_t\eps_{tr}}{(1+|\nabla_x\phi|^2 - \eps_t^2)^{\frac{3}{2}}}\big)\label{eq:enerror1}\\
&+D^{\alpha_2}\big(O(\frac{1}{r^3})\big)D^{\alpha_3}\eps_r +D^{\alpha_2}\big(O(\frac{1}{r^4})\big)D^{\alpha_3}\big[\frac{1}{(1+|\nabla_{x}\phi|^2 - \eps_t^2)^{\frac{3}{2}}} - \frac{1}{(1+Q_r^2)^{\frac{3}{2}}}\big]\big]\label{eq:enerror2}\\
&+D^{\alpha_1}(\sqrt{1+|\nabla_x\phi|^2 - \eps_t^2} - \sqrt{1+|\nabla_x Q|^2})\sum_{i=1,2}D^{\alpha_2}Q_{x_i}D^{\alpha_3}\partial_{x_i}[\frac{1}{\sqrt{1+|\nabla_x Q|^2}}]\label{eq:enerror3}
\end{align}
In order to improve the bound \eqref{eq:en}, we use the standard energy method for free waves. Specifically, writing
\[
X_{\alpha}(t): = \frac{1}{2}\int_0^\infty (|D^{\alpha}\eps_t(t, \cdot)|^2 + |\partial_rD^{\alpha}\eps(t, \cdot)|^2)\,rdr,
\]
we get
\[
X'_{\alpha}(t) = \sum_{i=1}^4A^i_{\alpha}(t) - \int_0^\infty [D^{\alpha}, \Box]\eps D^{\alpha}\eps_t\,rdr
\]
where the $A^i_\alpha$ are the contributions corresponding to the terms \eqref{eq:ennullform} to \eqref{eq:enerror3}, i. e. writing the latter as $X_\alpha^1,\ldots, X_\alpha^4$, we have
\[
A^i_{\alpha} = \int_0^\infty X_{\alpha}^i D^{\alpha}\eps_t\,rdr
\]

{\it{Contribution of $A^1_\alpha$.}} We treat the first line of \eqref{eq:ennullform}, the second being similar. This contribution is the integral
\begin{align}
&\sum_{\sum\alpha_i = \alpha}\int_0^t\int_0^\infty D^{\alpha_1}(\sqrt{1+|\nabla_x\phi|^2 - \eps_t^2})D^{\alpha_2}(\eps_t)\nonumber\\&\hspace{4cm}\times\partial_t D^{\alpha_3}\big(\frac{1}{\sqrt{1+|\nabla_x\phi|^2 - \eps_t^2}} - \frac{1}{\sqrt{1+|\nabla_xQ|^2}}\big)D^{\alpha}\eps_t\,rdrdt\label{eq:A_alpha1}
\end{align}
Using the Huyghen's principle which implies $r\geq \lambda - t$ on the support of $\eps$, we obtain the bound
\begin{equation}\label{eq:unboosted1}
\big|\partial_t D^{\alpha_3}\big(\frac{1}{\sqrt{1+|\nabla_x\phi|^2 - \eps_t^2}} - \frac{1}{\sqrt{1+|\nabla_xQ|^2}}\big)\big|\lesssim [(\lambda - t)^{-1}+K\kappa_0\la t\ra^{-\frac{1}{2}}]\sum_{|\alpha|\leq |\alpha_3|+2}|D^{\alpha}\eps|
\end{equation}
Then we divide the above integral into the cases $\alpha_3 = \alpha$, $|\alpha|-1\geq|\alpha_3|\geq \frac{N}{2}$, $|\alpha_3|<\frac{N}{2}$. We first deal with the latter two. In the second situation, we have $|\alpha_1|+|\alpha_2|\leq \frac{N}{2}$, and so (pointwise bound)
\[
\big|D^{\alpha_1}(\sqrt{1+|\nabla_x\phi|^2 - \eps_t^2})D^{\alpha_2}(\eps_t)\big|\lesssim \la t\ra^{-\frac{1}{2}}K\kappa_0
\]
In this case, using also \eqref{eq:unboosted1}, the above integral \eqref{eq:A_alpha1} is then bounded by
\[
\lesssim \int_0^t (K\kappa_0)^3\la s\ra^{2\delta}\la s\ra^{-\frac{1}{2}} [(\lambda - s)^{-1}+K\kappa_0\la s\ra^{-\frac{1}{2}}]\,ds\lesssim [\delta^{-1}(K\kappa_0)^4 + (K\kappa_0)^3\frac{\log\lambda}{\lambda^{\frac{1}{2}}}] \la t\ra^{2\delta}
\]
whence we can close provided $\delta^{-1}(K\kappa_0)^4 + (K\kappa_0)^3\frac{\log\lambda}{\lambda^{\frac{1}{2}}}\ll (K\kappa_0)^2$.
\\
In the third situation when $|\alpha_3|<\frac{N}{2}$, we have
\begin{equation}\label{eq:-1}
\|\partial_t D^{\alpha_3}\big(\frac{1}{\sqrt{1+|\nabla_x\phi|^2 - \eps_t^2}} - \frac{1}{\sqrt{1+|\nabla_xQ|^2}}\big)\|_{L_{r}^\infty}\lesssim K\kappa_0[(\lambda - t)^{-1}+K\kappa_0\la t\ra^{-\frac{1}{2}}]\la t\ra^{-\frac{1}{2}},
\end{equation}
while also
\begin{equation}\label{eq:0}
\|D^{\alpha_1}(\sqrt{1+|\nabla_x\phi|^2 - \eps_t^2})D^{\alpha_2}(\eps_t)\|_{L^2_{rdr}}\lesssim K\kappa_0\la t\ra^{\delta},
\end{equation}
and so the corresponding contribution to \eqref{eq:A_alpha1} is also bounded by 
\[
\lesssim \int_0^t (K\kappa_0)^3\la s\ra^{2\delta}\la s\ra^{-\frac{1}{2}} [(\lambda - s)^{-1}+K\kappa_0\la s\ra^{-\frac{1}{2}}]\,ds\lesssim [\delta^{-1}(K\kappa_0)^4 + (K\kappa_0)^3\frac{\log\lambda}{\lambda^{\frac{1}{2}}}] \la t\ra^{2\delta}
\]
Finally, consider the case $\alpha_3 = \alpha$. We write schematically
\begin{align}\label{eq:1}
\partial_t D^{\alpha}\big(\frac{1}{\sqrt{1+|\nabla_x\phi|^2 - \eps_t^2}} - \frac{1}{\sqrt{1+|\nabla_xQ|^2}}\big) =F_1 D^{\alpha}\eps_{tt} + F_2 D^{\alpha}\eps_{tr}+F_3,
\end{align}
where we have
\[
\|\partial_t^\beta F_1\|_{L_r^\infty} + \|\partial_r^\beta F_2\|_{L_r^\infty}  \lesssim (\lambda - t)^{-1} + K\kappa_0\la t\ra^{-\frac{1}{2}},\,\beta=0,1
\]
\[
\|F_3\|_{L_{rdr}^2}\lesssim K\kappa_0\la t\ra^{\delta}[(\lambda - t)^{-1} + K\kappa_0\la t\ra^{-\frac{1}{2}}]
\]
In order to deal with the contributions of the top derivatives $ D^{\alpha}\eps_{tt}, D^{\alpha}\eps_{tr}$, we use integration by parts:

\begin{align*}
&\int_0^t\int_0^\infty (\sqrt{1+|\nabla_x\phi|^2 - \eps_t^2})\eps_t F_1 D^{\alpha}\eps_{tt}D^{\alpha}\eps_t\,rdrdt\\
&= \frac{1}{2}\int_0^\infty (\sqrt{1+|\nabla_x\phi|^2 - \eps_t^2})\eps_t F_1 \big(D^{\alpha}\eps_t\big)^2\,rdr|_{0}^t\\
&-\frac{1}{2}\int_0^t\int_0^\infty \partial_t\big[(\sqrt{1+|\nabla_x\phi|^2 - \eps_t^2})\eps_t F_1\big] \big(D^{\alpha}\eps_t\big)^2\,rdrdt\\
\end{align*}
and we have
\[
\int_0^\infty (\sqrt{1+|\nabla_x\phi|^2 - \eps_t^2})\eps_t F_1 \big(D^{\alpha}\eps_t\big)^2\,rdr|_{0}^t\lesssim K\kappa_0\la t\ra^{-\frac{1}{2}}\|D^{\alpha}\eps_t(t, \cdot)\|_{L^2_{rdr}}^2 + \kappa_0^3\ll (K\kappa_0)^2\la t\ra^{2\delta}
\]
and further
\begin{align*}
&\big|\int_0^t\int_0^\infty \partial_t\big[(\sqrt{1+|\nabla_x\phi|^2 - \eps_t^2})\eps_t F_1\big] \big(D^{\alpha}\eps_t\big)^2\,rdrdt\big|\\
&\lesssim \int_0^t (K\kappa_0)^3\la s\ra^{2\delta}[(\lambda - s)^{-1} + K\kappa_0\la s\ra^{-\frac{1}{2}}]\la s\ra^{-\frac{1}{2}}\,ds\\
&\lesssim \delta^{-1}(K\kappa_0)^4\la t\ra^{2\delta} + (K\kappa_0)^3\frac{\log\lambda}{\lambda^{\frac{1}{2}}}\ll (K\kappa_0)^2\la t\ra^{2\delta}
\end{align*}
if we choose $\kappa_0$ small enough (in relation to $\delta$ as well as in an absolute sense). \\

For the contribution of the second top derivative term, we find
\begin{align*}
&\int_0^t\int_0^\infty (\sqrt{1+|\nabla_x\phi|^2 - \eps_t^2})\eps_t F_2 \partial_r\big(D^{\alpha}\eps_{t}\big)D^{\alpha}\eps_t\,rdrdt\\
&=-\int_0^t\int_0^\infty \frac{1}{2r}\partial_r[r(\sqrt{1+|\nabla_x\phi|^2 - \eps_t^2})\eps_t F_2]\big(D^{\alpha}\eps_{t}\big)D^{\alpha}\eps_t\,rdrdt\\
\end{align*}
and we have
\[
\big|\frac{1}{2r}\partial_r[r(\sqrt{1+|\nabla_x\phi|^2 - \eps_t^2})\eps_t F_2](t, \cdot)\big|\lesssim K\kappa_0\la t\ra^{-\frac{1}{2}}[(\lambda - t)^{-1} + K\kappa_0\la t\ra^{-\frac{1}{2}}]
\]
It follows that
\begin{align*}
&\big|\int_0^t\int_0^\infty (\sqrt{1+|\nabla_x\phi|^2 - \eps_t^2})\eps_t [F_3 + F_2\big(D^{\alpha}\eps_{tr}\big)]D^{\alpha}\eps_t\,rdrdt\big|\\
&\lesssim \int_0^t K\kappa_0\la s\ra^{-\frac{1}{2}}(K\kappa_0)^2\la s\ra^{2\delta}[(\lambda - s)^{-1} + K\kappa_0\la s\ra^{-\frac{1}{2}}]\,ds\ll (K\kappa_0)^2\la t\ra^{2\delta},\,0\leq t\leq \lambda - C_1,
\end{align*}
provided $\kappa_0$ is small enough in relation to $\delta$, as well as in absolute size.
This concludes treating the contribution of $A^1_\alpha$.
\\

{\it{Contribution of $A^2_\alpha$}}. We again distinguish between the cases $|\alpha_3|<|\alpha|$, $\alpha_3 = \alpha$. In the former case, we get provided $|\alpha|\geq |\alpha_3|>\frac{N}{2}$ the inequalities
\[
 \|D^{\alpha_2}\big(O(\frac{1}{r^2})\big) D^{\alpha_3}\big(\frac{\eps_{rr} + O(\frac{1}{r})\eps_r}{(1+|\nabla_x\phi|^2 - \eps_t^2)^{\frac{3}{2}}}\big)\|_{L^2_{rdr}}\lesssim (\lambda - t)^{-2}K\kappa_0 \la t\ra^{\delta},
\]
\[
\|D^{\alpha_2}\big(O(\frac{1}{r})\big)D^{\alpha_3}\big(\frac{\eps_r\eps_{rr} - \eps_t\eps_{tr}}{(1+|\nabla_x\phi|^2 - \eps_t^2)^{\frac{3}{2}}}\big)\|_{L^2_{rdr}}\lesssim (K\kappa_0)^2(\lambda - t)^{-1}\la t\ra^{-\frac{1}{2}}\la t\ra^{\delta}
\]
\[
\|D^{\alpha_1}(\sqrt{1+|\nabla_x\phi|^2 - \eps_t^2})\|_{L^\infty_{rdr}}\lesssim 1
\]
and if $\alpha_3\leq \frac{N}{2}$, we have
\[
\|D^{\alpha_2}\big(O(\frac{1}{r^2})\big) D^{\alpha_3}\big(\frac{\eps_{rr} + O(\frac{1}{r})\eps_r}{(1+|\nabla_x\phi|^2 - \eps_t^2)^{\frac{3}{2}}}\big)\|_{L^\infty_{rdr}}\lesssim (K\kappa_0)(\lambda - t)^{-2}\la t\ra^{-\frac{1}{2}}
\]
\[
\|D^{\alpha_2}\big(O(\frac{1}{r})\big)D^{\alpha_3}\big(\frac{\eps_r\eps_{rr} - \eps_t\eps_{tr}}{(1+|\nabla_x\phi|^2 - \eps_t^2)^{\frac{3}{2}}}\big)\|_{L^\infty_{rdr}}\lesssim (K\kappa_0)^2(\lambda - t)^{-1}\la t\ra^{-1}
\]
\[
\|D^{\alpha_1}(\sqrt{1+|\nabla_x\phi|^2 - \eps_t^2})\|_{L^2_{rdr}}\lesssim (K\kappa_0)\langle t\rangle^{\delta}[(\lambda - t)^{-1}+(K\kappa_0)\langle t\rangle^{-\frac{1}{2}}]
\]
We infer that
\begin{align*}
|\int_0^t A_\alpha^2(s)\,ds|&\leq \int_0^t\int_0^\infty |X_\alpha^2| |D^\alpha\eps_s|\,r\,drds\\
&\lesssim \int_0^t (K\kappa_0)^2\la s\ra^{2\delta}(\lambda- s)^{-1}\big[(K\kappa_0)\la s\ra^{-\frac{1}{2}}+ (\lambda - s)^{-1}\big]\,ds\\
&\ll (K\kappa_0)^2\la t\ra^{2\delta},\,0\leq t\leq \lambda - C_1
\end{align*}
provided we choose $C_1$ sufficiently large.
On the other hand, if $\alpha_3 = \alpha$, only the case when all derivatives fall on $\eps_{rr}$ needs to be considered, as the remaining cases are treated in the situation $|\alpha_3|<|\alpha|$. Thus this is the contribution of the terms
\begin{equation}\label{eq:2}
O(\frac{1}{r^2})\frac{D^{\alpha}\eps_{rr}}{(1+|\nabla_x\phi|^2 - \eps_t^2)^{\frac{3}{2}}},\, O(\frac{1}{r})\frac{\eps_r D^{\alpha}\eps_{rr}}{(1+|\nabla_x\phi|^2 - \eps_t^2)^{\frac{3}{2}}}
\end{equation}
Here we perform integration by parts twice:
\begin{align}
&\int_0^t\int_0^\infty(\sqrt{1+|\nabla_x\phi|^2 - \eps_t^2})O(\frac{1}{r^2})\frac{\partial_r D^{\alpha}\eps_{r}}{(1+|\nabla_x\phi|^2 - \eps_t^2)^{\frac{3}{2}}}D^{\alpha}\eps_t\,rdrdt\label{eq:(1)}\\
& = -\int_0^t\int_0^\infty(\sqrt{1+|\nabla_x\phi|^2 - \eps_t^2})O(\frac{1}{r^2})\frac{D^{\alpha}\eps_{r}}{(1+|\nabla_x\phi|^2 - \eps_t^2)^{\frac{3}{2}}}\partial_r D^{\alpha}\eps_t\,rdrdt\nonumber\\
&-\int_0^t\int_0^\infty\frac{1}{r}\partial_r\big[\frac{1}{(1+|\nabla_x\phi|^2 - \eps_t^2)}O(\frac{1}{r})\big]D^{\alpha}\eps_{r}D^{\alpha}\eps_t\,rdrdt\nonumber
\end{align}
\begin{align}
&\int_0^t\int_0^\infty(\sqrt{1+|\nabla_x\phi|^2 - \eps_t^2})O(\frac{1}{r})\frac{\eps_r D^{\alpha}\eps_{rr}}{(1+|\nabla_x\phi|^2 - \eps_t^2)^{\frac{3}{2}}}D^{\alpha}\eps_t\,rdrdt\label{eq:(2)}\\
&=-\int_0^t\int_0^\infty(\sqrt{1+|\nabla_x\phi|^2 - \eps_t^2})O(\frac{1}{r})\frac{\eps_r D^{\alpha}\eps_{r}}{(1+|\nabla_x\phi|^2 - \eps_t^2)^{\frac{3}{2}}}\partial_r D^{\alpha}\eps_t\,rdrdt\nonumber\\
&-\int_0^t\int_0^\infty \frac{1}{r}\partial_r\big[O(1)\frac{\eps_r }{(1+|\nabla_x\phi|^2 - \eps_t^2)}\big]D^{\alpha}\eps_{r}D^{\alpha}\eps_t\,rdrdt\nonumber
\end{align}
For the first terms after the equality sign in these two equations, we perform an integration by parts with respect to $t$, thereby obtaining the expressions
\begin{align*}
\eqref{eq:(1)} =& -\frac{1}{2}\int_0^\infty O(\frac{1}{r^2})\frac{1}{(1+|\nabla_x\phi|^2 - \eps_t^2)}|D^{\alpha}\eps_r|^2\,rdr|_{0}^t\\
&+\frac{1}{2}\int_0^t\int_0^\infty\partial_t\big(\frac{1}{(1+|\nabla_x\phi|^2 - \eps_t^2)}\big)O(\frac{1}{r^2})|D^{\alpha}\eps_r|^2\,rdr\,dt\\
&-\int_0^t\int_0^\infty\frac{1}{r}\partial_r\big[\frac{1}{(1+|\nabla_x\phi|^2 - \eps_t^2)}O(\frac{1}{r})\big]D^{\alpha}\eps_{r}D^{\alpha}\eps_t\,rdrdt\\
\end{align*}
as well as
\begin{align*}
\eqref{eq:(2)} =& -\frac{1}{2}\int_0^\infty O(\frac{1}{r})\frac{\eps_r}{(1+|\nabla_x\phi|^2 - \eps_t^2)}|D^{\alpha}\eps_r|^2\,rdr|_{0}^t\\
&+\frac{1}{2}\int_0^t\int_0^\infty O(\frac{1}{r})\partial_t\big(\frac{\eps_r}{(1+|\nabla_x\phi|^2 - \eps_t^2)}\big)|D^{\alpha}\eps_r|^2\,rdr dt\\
&-\int_0^t\int_0^\infty \frac{1}{r}\partial_r\big[O(1)\frac{\eps_r }{(1+|\nabla_x\phi|^2 - \eps_t^2)}\big]D^{\alpha}\eps_{r}D^{\alpha}\eps_t\,rdrdt\\
\end{align*}
The first combination of terms is bounded by
\begin{align*}
|\eqref{eq:(1)}|\lesssim &(\lambda - t)^{-2}(K\kappa_0)^2 \la t\ra^{2\delta} + \int_0^t (K\kappa_0)^3 \la s\ra^{2\delta-\frac{1}{2}}(\lambda - s)^{-3}\,ds\\
&\ll (K\kappa_0)^2\la t\ra^{2\delta},\,0\leq t\leq \lambda - C_1,
\end{align*}
while the second combination of terms is bounded by
\begin{align*}
|\eqref{eq:(2)}|\lesssim & (\lambda - t)^{-1}\la t\ra^{-\frac{1}{2}}(K\kappa_0)^3 \la t\ra^{2\delta} + \int_0^t (K\kappa_0)^3 \la s\ra^{2\delta-\frac{1}{2}}(\lambda - s)^{-1}\,ds\\
&\ll (K\kappa_0)^2\la t\ra^{2\delta},\,0\leq t\leq \lambda - C_1
\end{align*}
as desired.
\\

{\it{Contribution of $A_{\alpha}^{3,4}$.}} These can easily be handled as in the situation $|\alpha_3|<|\alpha|$ for the preceding term $A_\alpha^{2}$.
\\

Finally, to complete the (plain) energy bootstrap, we still need to bound the contribution of the commutator term $[D^{\alpha}, \Box]\eps$. It is immediate to verify that
\[
[D^{\alpha}, \Box]\eps = \sum_{|\beta|\leq |\alpha|-1} F_\beta (r)D^{\beta}\eps_r
\]
with $|F_{\beta}|\lesssim r^{-1-|\alpha-\beta|}\lesssim r^{-2}$. Then we get the bound
\begin{align*}
|\int_0^t \int_0^\infty [D^\alpha, \Box]\eps D^{\alpha}\eps_t\,rdr\,dt|&\lesssim \int_0^t (\lambda - s)^{-2}(K\kappa_0)^2\la s\ra^{2\delta}\,ds\\
&\ll (K\kappa_0)^2\la t\ra^{2\delta},\,0\leq t\leq \lambda - C_1
\end{align*}
provided $C_1$ is chosen large enough. This concludes the bootstrap for the bound \eqref{eq:en}.
\\

{\bf{Boosted energy bounds}}. Here we improve the estimate
\begin{align}\label{eq:boosten}
\sum_{1\leq\alpha\leq N-1}\sum_{\Gamma}\|\la t\ra^{-\delta}\partial_{t,r}^{\alpha}\Gamma\eps(t, \cdot)\|_{L_t^\infty L^2_{rdr}([0,T]\times \R_{+})}\leq K\kappa_0
\end{align}
We mimic the process used for the energy bounds, but this time with $D^\alpha = \partial_t^{\beta_1}\partial_r^{\beta_2}\Gamma_{1,2}$. To begin with, note that we no longer have the simple bound \eqref{eq:unboosted1}, but instead the more complicated
\begin{align}
&\big|\partial_{t,r} D^{\alpha_3}\big(\frac{1}{\sqrt{1+|\nabla_x\phi|^2 - \eps_t^2}} - \frac{1}{\sqrt{1+|\nabla_xQ|^2}}\big)\big|\label{eq:boosted1}\\&\lesssim [(\lambda - t)^{-1}+K\kappa_0\la t\ra^{-\frac{1}{2}}]\sum_{1\leq|\alpha|\leq |\alpha_3|+2}r^{-s(\alpha)}|D^{\alpha}\eps|,\,|\alpha_3|\leq \frac{N}{2}\nonumber
\end{align}
\begin{align}
&\big|\partial_{t,r} D^{\alpha_3}\big(\frac{1}{\sqrt{1+|\nabla_x\phi|^2 - \eps_t^2}} - \frac{1}{\sqrt{1+|\nabla_xQ|^2}}\big)\big|\label{eq:boosted2}\\&\lesssim [(\lambda - t)^{-1}+K\kappa_0\langle t\rangle^{\delta}(\lambda - t)^{-\frac{1}{2}}]\sum_{1\leq|\alpha|\leq |\alpha_3|+2}r^{-s(\alpha)}|D^{\alpha}\eps|,\,N-2\geq|\alpha_3|>\frac{N}{2}\nonumber
\end{align}

where now $D^\alpha_3$ etc may involve one operator of the form $\Gamma_{1,2}$, and we have
\[
s(\alpha) = 1,\,\text{provided}\,D^{\alpha} = \Gamma_{1,2},
\]
and $s(\alpha) = 0$ otherwise. To see this, note that if the operator $\Gamma_{1,2}$ falls on the first factor in a product term $\nabla_{x}Q\cdot\nabla_x\eps = Q_r\eps_r$, then we have $\Gamma_2 Q_r\eps_r = O(\frac{1}{r})\eps_r$, while
$\Gamma_1Q_r\eps_r = O(\frac{1}{r^2})(\Gamma_1\eps - r\eps_t) = O(\frac{1}{r})\frac{\Gamma_1\eps}{r} - O(\frac{1}{r})\eps_t$. The reason for the term factor $K\kappa_0\langle t\rangle^{\delta}(\lambda - t)^{-\frac{1}{2}}$ is the fact that the operator $\Gamma_{1,2}$ may fall on one factor $\eps_{t,r}$ while the remaining $\partial_t^{\beta_1}\partial_r^{\beta_2}$ may fall on another factor $\eps_{t,r}$, and a priori we only have an $L^2$-bound at our disposal for this in case $|\alpha_3|>\frac{N}{2}$. However, since
\[
\beta_1+\beta_2 = |\alpha_3|-1\leq N-3,
\]
we can the use the radial Sobolev embedding and our support assumptions to get
\[
|\partial_{t,r}\partial_t^{\beta_1}\partial_r^{\beta_2}\eps_{t,r}(t, r)|\lesssim r^{-\frac{1}{2}}\|\partial_{t,r}\partial_t^{\beta_1}\partial_r^{\beta_2}\eps_{t,r}(t, \cdot)\|_{H^1_{rdr}}\lesssim (K\kappa_0)(\lambda - t)^{-\frac{1}{2}}\langle t\rangle^{\delta}
\]

Now we estimate the same four contributions as for the energy bounds:
\\

Using \eqref{eq:ennullform} - \eqref{eq:enerror3}, we commence with
\\

{\it{Contribution of $A_{\alpha}^1$}}; here we use the same notation as before. Writing this as in \eqref{eq:A_alpha1}, we distinguish between $\alpha_3 = \alpha$, $|\alpha| - 1\geq |\alpha_3|\geq\frac{N}{2}$, $|\alpha_3|<\frac{N}{2}$. In the second situation, we can exactly replicate the argument given for the plain energy bounds, except in the case when $D^{\alpha_3}$ does not involve the operator $\Gamma_{1,2}$, whence one of $D^{\alpha_{1,2}}$ involves this operator, and hence the product $D^{\alpha_1}(\sqrt{\cdots}) D^{\alpha_2}\eps_t$ cannot simply be placed into $L^\infty$ without incurring a loss. Assume first that
\[
D^{\alpha_2}\eps_t = D^{\alpha_2'}\Gamma_{1,2}\eps_t
\]
Then we exploit the gain in the Sobolev embedding due to our assumption of radiality: we distinguish between two cases, in each of which we have to exploit the null-structure:
\\
{\it{(i): $|r-t|<\frac{t}{10}$.}} Note that we have
\begin{equation}\label{eq:null1}
\eps_r^2 - \eps_t^2 = \frac{(\Gamma_1+\Gamma_2)\eps (\eps_r - \eps_t)}{r+t},
\end{equation}
and so we have\footnote{More precisely, the absolute value of the expression on the left is bounded by a linear combination of the absolute values of expressions like the one on the right, with $\alpha_3$ replaced by $\beta\leq \alpha_3$. This follows from the a priori bounds underlying our calculations. Our argument works as well for these more general expressions}
\begin{align*}
&\partial_{t,r}D^{\alpha_3}\big(\frac{1}{\sqrt{1+|\nabla_x\phi|^2 - \eps_t^2}} - \frac{1}{\sqrt{1+|\nabla_xQ|^2}}\big)\\&\sim \partial_{t,r}D^{\alpha_3}(Q_r\eps_r) + \partial_{t,r}D^{\alpha_3}\big[\frac{(\Gamma_1+\Gamma_2)\eps (\eps_r - \eps_t)}{r+t}\big]
\end{align*}
Since $|(\Gamma_1+\Gamma_2)\eps(t, r)|\lesssim K\kappa_0\la t\ra^{\delta}\la\log r\ra^{\frac{1}{2}}$ on the support\footnote{This follows from 
$|(\Gamma_1+\Gamma_2)\eps(t, r)|\lesssim \langle \log r\rangle^{\frac{1}{2}}\|\partial_r(\Gamma_1+\Gamma_2)\eps\|_{L^2_{rdr}}$}
 of the function under our assumption, as well as
\[
\|D^{\alpha}\Gamma^{\mu}\eps(t, r)\|_{L^\infty}\lesssim r^{-\frac{1}{2}}\|D^{\alpha}\Gamma^{\mu}\eps(t, r)\|_{H^1_{r\,dr}},
\]
we obtain obtain the bound
\begin{equation}\label{eq:nasty1}
\|\partial_{t,r}D^{\alpha_3}\big[\frac{(\Gamma_1+\Gamma_2)\eps (\eps_r - \eps_t)}{r+t}\big]\|_{L^2_{r\,dr}}\lesssim \frac{1}{t}(K\kappa_0)^2\la\log t\ra^{\frac{1}{2}}\la t\ra^{2\delta}
\end{equation}
and further
\begin{equation}\label{eq:simple1}
\|\partial_{t,r}D^{\alpha_3}(Q_r\eps_r)\|_{L^2_{r\,dr}}\lesssim (K\kappa_0)(\lambda - t)^{-1}\la t\ra^\delta
\end{equation}
We conclude that under our current assumptions, we can bound
\begin{align}
&\big|\int_0^t\int_0^\infty \chi^1(r,t)D^{\alpha_1}(\sqrt{1+|\nabla_x\phi|^2 - \eps_t^2})D^{\alpha_2}(\eps_t)\nonumber\\&\hspace{4cm}\times\partial_t D^{\alpha_3}\big(\frac{1}{\sqrt{1+|\nabla_x\phi|^2 - \eps_t^2}} - \frac{1}{\sqrt{1+|\nabla_xQ|^2}}\big)D^{\alpha}\eps_t\,rdrdt\big|\label{eq:longmess}
\\&\lesssim \int_0^t \|D^{\alpha_1}(\sqrt{1+|\nabla_x\phi|^2 - \eps_t^2})\|_{L_{rdr}^\infty}\|\chi^1(r,t)D^{\alpha_2}(\eps_t)\|_{L^\infty_{rdr}}\|\partial_t D^{\alpha_3}\big(\cdots\big)\|_{L^2_{rdr}}\|D^{\alpha}\eps_t\|_{L^2_{rdr}}\,dt\nonumber
\end{align}
where $\chi^1(r,t)$ localizes to the region $|r-t|<\frac{t}{10}$, $r\geq \lambda - t$. But we have
\[
\|\chi^1(r,t)D^{\alpha_2}(\eps_t)\|_{L^\infty_{rdr}}\lesssim t^{-\frac{1}{2}}\|D^{\alpha_2}(\eps_t)\|_{H^1_{rdr}}\lesssim (K\kappa_0) \la t\ra^{\delta-\frac{1}{2}},
\]
and so we can bound the preceding expression, using \eqref{eq:nasty1} as well as \eqref{eq:simple1} by
\[
\lesssim \int_0^t (K\kappa_0)^{3}(K\kappa_0\la s\ra^{4\delta - \frac{3}{2}}+ \la s\ra^{3\delta-\frac{1}{2}}(\lambda - s)^{-1})\la\log s\ra^{\frac{1}{2}}\,ds\lesssim (K\kappa_0)^3 \la t\ra^{2\delta},
\]
provided we choose $2\delta<\frac{1}{2}$.
\\

{\it{(ii): $|r-t|\geq \frac{t}{10}$.}} Here we use the identity
\begin{equation}\label{eq:null2}
\eps_r^2 - \eps_t^2 = \frac{(\Gamma_2\eps)^2 - (\Gamma_1\eps)^2}{r^2 - t^2},
\end{equation}
whence
\begin{align*}
&\partial_{t,r}D^{\alpha_3}\big(\frac{1}{\sqrt{1+|\nabla_x\phi|^2 - \eps_t^2}} - \frac{1}{\sqrt{1+|\nabla_xQ|^2}}\big)\\&\sim \partial_{t,r}D^{\alpha_3}(Q_r\eps_r) + \partial_{t,r}D^{\alpha_3}\big[\frac{(\Gamma_2\eps)^2 - (\Gamma_1\eps)^2}{r^2 - t^2}\big],
\end{align*}
and if $\chi^2$ localizes to $|r-t|\geq \frac{t}{10}, r\geq \lambda - t$, we obtain
\[
\|\chi^2\partial_{t,r}D^{\alpha_3}\big[\frac{(\Gamma_2\eps)^2 - (\Gamma_1\eps)^2}{r^2 - t^2}\big]\|_{L^2_{r\,dr}}
\lesssim (K\kappa_0)^2\la\log t\ra^{\frac{1}{2}} t^{2\delta-2}
\]
We conclude that we can bound the long expression \eqref{eq:longmess} by
\begin{align*}
&\lesssim (K\kappa_0)^4\int_0^t \la\log s\ra^{\frac{1}{2}}\la s\ra^{4\delta - 2}\,ds\\& + \int_0^t \|D^{\alpha_1}\big(\cdots\big)\|_{L^\infty}\|D^{\alpha_2}\eps_t\|_{L^\infty_{r\,dr}}\|\chi^2\partial_{t,r}D^{\alpha_3}(Q_r\eps_r)\|_{L^2_{r\,dr}}\|D^\alpha\eps_t\|_{L^2_{r\,dr}}\,dt
\end{align*}
In order to bound the contribution of the second integral expression, we have to exploit an extra gain in $t$. For this, note that
\[
\chi^2\eps_r = \chi^2\frac{r\Gamma_2\eps - t\Gamma_1\eps}{r^2 - t^2},
\]
 and hence we have
 \[
 \|\chi^2\partial_{t,r}D^{\alpha_3}\big(Q_r\eps_r\big)\|_{L^2_{r\,dr}}\lesssim (K\kappa_0)\la t\ra^{\delta - 1}
\]
In conjunction with\footnote{Recall that here the $D^{\alpha_2}$ involves an operator $\Gamma_{1,2}$ whence we cannot directly apply the dispersive estimate} 
\begin{equation}\label{eq:weakeps2}
|D^{\alpha_2}\eps_t|\lesssim (K\kappa_0)\langle t\rangle^{\delta}(\lambda - t)^{-\frac{1}{2}}
\end{equation}
We conclude that
\begin{align*}
&(K\kappa_0)^4\int_0^t \la\log s\ra^{\frac{1}{2}}\la s\ra^{4\delta - 2}\,ds\\& + \int_0^t \|D^{\alpha_1}\big(\cdots\big)\|_{L^\infty}\|D^{\alpha_2}\eps_t\|_{L^\infty_{r\,dr}}\|\chi^2\partial_{t,r}D^{\alpha_3}(Q_r\eps_r)\|_{L^2_{r\,dr}}\|D^\alpha\eps_t\|_{L^2_{r\,dr}}\,dt\\
&\lesssim  (K\kappa_0)^4\int_0^t \la\log s\ra^{\frac{1}{2}}\la s\ra^{4\delta - 2}\,ds + (K\kappa_0)^3\int_0^t\la s\ra^{3\delta - 1}(\lambda - s)^{-\frac{1}{2}}\,ds \ll (K\kappa_0)^2\la t\ra^{2\delta},
\end{align*}
provided $\delta<\frac{1}{2}$.
\\
The case when $D^{\alpha_1}\eps = \Gamma_{1,2}D^{\alpha_1'}\eps$ is handled analogously: as in \eqref{eq:boosted1} (recall that $|\alpha_1|\leq \frac{N}{2}$ under our current assumption $|\alpha_3|\geq \frac{N}{2}$) we find 
\begin{equation}\label{eq:alpha1Gamma}
\big|D^{\alpha_1}\big(\sqrt{1+|\nabla_x\phi|^2 - \eps_t^2}\big)\big|\lesssim [(\lambda - t)^{-1} + (K\kappa_0)\langle t\rangle^{-\frac{1}{2}}]\sum_{1\leq|\alpha|\leq |\alpha_1|+1}r^{-s(\alpha)}|D^{\alpha}\eps|
\end{equation}
where $s(\alpha)$ is defined as in \eqref{eq:boosted1}. Further, from \eqref{eq:unboosted1} and our assumption $|\alpha|-1\geq |\alpha_3|$, we have 
\[
\|\partial_{t,r}D^{\alpha_3}\big(\frac{1}{\sqrt{1+|\nabla_x\phi|^2 - \eps_t^2}} - \frac{1}{\sqrt{1+|\nabla_xQ|^2}}\big)\|_{L^2_{rdr}}\lesssim [(\lambda - t)^{-1} + (K\kappa_0)\langle t\rangle^{-\frac{1}{2}}](K\kappa_0)\langle t\rangle^{\delta}
\]
Combining these last two bounds with the simple $\|D^{\alpha_2}\eps_t\|_{L^\infty}\lesssim (K\kappa_0)\langle t\rangle^{-\frac{1}{2}}$, and inserting everything into \eqref{eq:longmess} (but without the cutoff $\chi^1$), we obtain 
\begin{align*}
|\eqref{eq:longmess}\,\text{without $\chi^1$}|&\lesssim \int_0^t\|D^{\alpha_1}(\ldots)(s, \cdot)\|_{L^\infty_{rdr}}\|D^{\alpha_2}\eps_s\|_{L^\infty}\|\partial_{s,r}D^{\alpha_3}(\ldots)\|_{L^2_{rdr}}\|D^{\alpha}\eps_s\|_{L^2_{rdr}}\,ds\\
&\lesssim (K\kappa_0)^4\int_0^t  [(\lambda - s)^{-1} + (K\kappa_0)\langle s\rangle^{-\frac{1}{2}}]^2\langle s\rangle^{3\delta-\frac{1}{2}}\,ds\ll (K\kappa_0)^2\langle t\rangle^{2\delta}
\end{align*}
provided that $\delta<\frac{1}{2}$.

This concludes the bootstrap for the contribution of $A_\alpha^1$ and the boosted energy, provided $|\alpha|-1\geq |\alpha_3|\geq \frac{N}{2}$.
\\
Now consider the case $|\alpha_3|<\frac{N}{2}$. First assume that $D^{\alpha_3} = D^{\alpha'_3}\Gamma_{1,2}$. If we have $\max|\alpha_{1,2}|\leq \frac{N}{2}$, then we have
\[
\| D^{\alpha_1}\big(\sqrt{1+|\nabla_x\phi|^2 - \eps_t^2}\big)D^{\alpha_2}\eps_t\|_{L^\infty_{r\,dr}}
\lesssim (K\kappa_0)\la t\ra^{-\frac{1}{2}},
\]
and so we bound \eqref{eq:longmess} without the localizer $\chi^1$ by
\[
\lesssim \int_0^t (K\kappa_0)^3\la s\ra^{-\frac{1}{2}}[\lambda - s)^{-1} + (K\kappa_0)\la s\ra^{-\frac{1}{2}}]\la s\ra^{2\delta}\,ds\ll (K\kappa_0)\la t\ra^{2\delta}
\]
where we have used the bound \eqref{eq:boosted1}. 
Next, if $|\alpha_2|>\frac{N}{2}$ and $D^{\alpha_3}$ is as before, we split into the regimes $|r-t|<t^{\delta_1}, \delta_1\ll 1$, and the complement. Note that necessarily $|\alpha_2|\leq N-2$, whence in the first situation ($|r-t|<t^{\delta_1}$) we have (using Sobolev)
\[
|D^{\alpha_2}\eps_t|\lesssim (K\kappa_0)\la t\ra^{\delta - \frac{1}{2}}, |\partial_t D^{\alpha_3}\big(\ldots\big)|\lesssim (K\kappa_0) [(\lambda - t)^{-1}+(K\kappa_0)\langle t\rangle^{-\frac{1}{2}}]\langle t\rangle^{\delta}(\lambda - t)^{-\frac{1}{2}}
\]
where for the second bound we have used \eqref{eq:boosted1}. 
It follows that we control \eqref{eq:longmess} under the  restriction $|r-t|<t^{\delta_1}$ by (using Cauchy Schwarz with respect to $\mu$)
\[
\lesssim \int_0^t (K\kappa_0) [(\lambda - s)^{-1}+(K\kappa_0)\langle s\rangle^{-\frac{1}{2}}]\langle s\rangle^{3\delta+\frac{\delta_1}{2}-\frac{1}{2}}(\lambda - s)^{-\frac{1}{2}}\,ds
\ll (K\kappa_0)^2\la t\ra^{2\delta},
\]
provided $\delta+\frac{\delta_1}{2}<\frac{1}{2}$. If on the other hand $|r-t|\geq t^{\delta_1}$, we write
\[
D^{\alpha_2}\eps_t = \frac{t\Gamma_2 D^{\alpha_2}\eps - r\Gamma_1 D^{\alpha_2}\eps}{t^2 - r^2},
\]
and further
\[
|\partial_t D^{\alpha_3}\big(\ldots\big)|\lesssim (K\kappa_0) [(\lambda - t)^{-1}+(K\kappa_0)\langle t\rangle^{-\frac{1}{2}}]\langle t\rangle^{\delta}(\lambda - t)^{-\frac{1}{2}}
\]
and so we bound \eqref{eq:longmess} in this situation by
\begin{align*}
&\lesssim \int_0^t\| D^{\alpha_1}(\ldots)\|_{L^\infty_{r\,dr}}\|\chi_{|r-t|\geq t^{\delta_1}}\frac{t\Gamma_2 D^{\alpha_2}\eps - r\Gamma_1 D^{\alpha}\eps}{t^2 - r^2}\|_{L^2_{r\,dr}}\|\partial_t D^{\alpha_3}\big(\ldots\big)\|_{L^\infty_{r\,dr}}\|D^{\alpha}\eps_t\|_{L^2_{r\,dr}}\,dt\\
&\lesssim (K\kappa_0)^3\int_0^t\langle s\rangle^{2\delta-\delta_1}[(\lambda - s)^{-1}+(K\kappa_0)\langle s\rangle^{-\frac{1}{2}}]\langle s\rangle^{\delta}(\lambda - s)^{-\frac{1}{2}}
\,ds\\
&\ll (K\kappa_0)^2\la t\ra^{2\delta},
 \end{align*}
 provided $\delta <\delta_1$.
 The argument for the case $|\alpha_1|\geq \frac{N}{2}$ is similar.
 Next, consider the case when the derivative $\Gamma_{1,2}$ is either in $D^{\alpha_1}$ or $D^{\alpha_2}$. Then in light of \eqref{eq:alpha1Gamma} we have the bound
 \[
 \|D^{\alpha_1}\big(\sqrt{1+|\nabla_x\phi|^2 - \eps_t^2}\big)D^{\alpha_2}\eps_t\|_{L^2_{rdr}}\lesssim (K\kappa_0)\langle t\rangle^{\delta}
 \]
 while in light of equation \eqref{eq:boosted1} we also have
 \[
 \|\partial_t D^{\alpha_3}\big(\ldots\big)\|_{L^\infty_{r\,dr}}\lesssim (K\kappa_0)^2 \la t\ra^{-1}+(K\kappa_0)\la t\ra^{-\frac{1}{2}}(\lambda - t)^{-1},
 \]
 whence in this situation the expression \eqref{eq:longmess} is bounded by  \[
 \lesssim \int_0^t (K\kappa_0)\la s\ra^{2\delta}[ (K\kappa_0)^2 \la s\ra^{-1}+(K\kappa_0)\la s\ra^{-\frac{1}{2}}(\lambda - s)^{-1}]\,ds\ll (K\kappa_0)\la t\ra^{2\delta},\,0\leq t\leq\lambda - C_1
 \]

Finally, in case $\alpha_3 = \alpha$, it suffices to consider the case when all derivatives in $D^{\alpha_3}$ fall on a second derivative term, i. e. the first two terms of \eqref{eq:1} (the remaining cases are treated as before), and for these one proceeds exactly as after \eqref{eq:1}, using integration by parts. This concludes the contribution of $A_\alpha^1$.
\\

{\it{Contribution of $A_\alpha^2$.}} Next, we treat the contribution of the two terms in \eqref{eq:enerror1}, again in the situation where one of $D^{\alpha_{1,2,3}}$ involves a vector fields $\Gamma_{1,2}$. We may assume that $|\alpha_3|<|\alpha|$ or that not all derivatives fall on $\eps_{rr}$ since else one replicates the integration by parts argument from the energy bounds in \eqref{eq:2}.
\\
{\it{Start with $|\alpha_1|\leq \frac{N}{2}$.}}
First, assume that one of $D^{\alpha_{2,3}}$ involves $\Gamma_{1,2}$. Then we have
\[
Y_1(t): = \|D^{\alpha_2}\big(O(\frac{1}{r^2})\big) D^{\alpha_3}\big(\frac{\eps_{rr} + O(\frac{1}{r})\eps_r}{(1+|\nabla_x\phi|^2 - \eps_t^2)^{\frac{3}{2}}}\big)\|_{L^2_{rdr}}\lesssim (\lambda - t)^{-2}K\kappa_0 \la t\ra^{\delta},
\]
where one uses relations like after \eqref{eq:boosted2} of the form 
\[
\Gamma_1\big(O(\frac{1}{r^2})\big)\tilde{\eps}_{rr} =  O(\frac{1}{r^3})\big(\Gamma_1\tilde{\eps}_r - r\tilde{\eps}_{tr}\big),\,\Gamma_2\big(O(\frac{1}{r^2})\big)\tilde{\eps}_{rr} =  \big(O(\frac{1}{r^2})\big)\tilde{\eps}_{rr} 
\]

Also, we find
\[
Y_2(t): = \|D^{\alpha_2}\big(O(\frac{1}{r})\big)D^{\alpha_3}\big(\frac{\eps_r\eps_{rr} - \eps_t\eps_{tr}}{(1+|\nabla_x\phi|^2 - \eps_t^2)^{\frac{3}{2}}}\big)\|_{L^2_{rdr}}\lesssim (K\kappa_0)^2(\lambda - t)^{-1}t^{-\frac{1}{2}}\la t\ra^{2\delta}
\]
To see this, write
\[
|D^{\alpha}(\eps_r\eps_{rr} - \eps_t\eps_{tr})| \lesssim \sum_{\beta_1+\beta_2 = \alpha}|D^{\beta_1}\eps_rD^{\beta_2}\eps_{rr} - D^{\beta_1}\eps_t D^{\beta_2}\eps_{tr}|
\]
If $|\alpha|<N$, at most one of $|\beta_1|$, $|\beta_2|$ is $>\frac{N}{2}$. If for this multi-index $\beta_{1,2}$ we have that $D^{\beta_{1,2}}$ also involves $\Gamma_{1,2}$ exactly once, and the other operator $D^{\beta_{2,1}}$ does not, then under our assumptions
\[
\|D^{\beta_1}\eps_rD^{\beta_2}\eps_{rr} - D^{\beta_1}\eps_t D^{\beta_2}\eps_{tr}\|_{L^2_{r\,dr}}
\lesssim (K\kappa_0)^2\la t\ra^{\delta-\frac{1}{2}}
\]
On the other hand, if the unique operator $D^{\beta_{1,2}}$ with $|\beta_{1,2}|>\frac{N}{2}$ does not involve $\Gamma_{1,2}$, but the other operator does, then assuming say $|\beta_2|>\frac{N}{2}$, we get (using the improved radial Sobolev embedding)
\begin{align*}
&\|D^{\beta_1}\eps_rD^{\beta_2}\eps_{rr} - D^{\beta_1}\eps_t D^{\beta_2}\eps_{tr}\|_{L^2_{r\,dr}}
\\
&\lesssim \|\chi_{r\geq t}D^{\beta_1}\eps_{r,t}\|_{L^\infty}\|D^{\beta_2}(\eps_{r,t})_{r}\|_{L^2_{r\,dr}}\\&+\|\chi_{r<t} \la r\ra^{\frac{1}{2}}D^{\beta_1}\eps_{r,t}\|_{L^\infty}\frac{1}{t}\|\chi_{r<t}\frac{\Gamma_1D^{\beta_2}\eps_{r,t}- rD^{\beta_2}(\eps_{r,t})_{t}}{\la r\ra^{\frac{1}{2}}}\|_{L^2_{r\,dr}}\\
&\lesssim (K\kappa_0)^2t^{-\frac{1}{2}}\la t\ra^{2\delta}
\end{align*}
The case $|\beta_1|>\frac{N}{2}$ is similar, and this establishes the bound on $Y_2(t)$. Since we also assumed $|\alpha_1|\leq \frac{N}{2}$, we have
\[
Y_3(t): = \|D^{\alpha_1}(\sqrt{1+|\nabla_x\phi|^2 - \eps_t^2})\|_{L^\infty_{r\,dr}}\lesssim 1,
\]
whence we obtain the bound
\begin{align*}
&\int_0^t Y_3(s)(Y_1(s) + Y_2(s))\|D^{\alpha}\eps_t\|_{L^2_{r\,dr}}\,ds\\
&\lesssim \int_0^t (K\kappa_0)^2(\lambda - s)^{-1}\la s\ra^{2\delta}[(\lambda - s)^{-1} + \la s\ra^{\delta}s^{-\frac{1}{2}}]\,ds\\
&\ll (K\kappa_0)^2\la t\ra^{2\delta},\,0\leq t\leq \lambda - C_1
\end{align*}
provided $C_1\gg 1$.
\\
{\it{Next, assuming $|\alpha_1|>\frac{N}{2}$}} while still assuming $\Gamma_{1,2}$ to occur in $D^{\alpha_{2,3}}$, we use
\begin{align*}
Y_1(t)\lesssim &(\lambda - t)^{-2}\sum_{|\beta|\leq |\alpha_3|}(\|\Gamma_{1,2}D^{\beta}\eps_r\|_{L^2_{r\,dr}} +\|D^{\beta}(\eps_{r,t})_{r}\|_{L^2_{r\,dr}})\\
&+(\lambda - t)^{-1}\sum_{|\beta|\leq |\alpha_3|}(\|\frac{\Gamma_{1,2}D^{\beta}\eps}{r}\|_{L^2_{r\,dr}} +\|\frac{D^{\beta}\eps_{r,t}}{r}\|_{L^2_{r\,dr}})\\
&\lesssim (K\kappa_0)(\lambda - t)^{-1} \la t\ra^{\delta}
\end{align*}
as well as
\[
Y_2(t)\lesssim (K\kappa_0)^2(\lambda - t)^{-1}\la t\ra^{\delta - \frac{1}{2}}.
\]
On the other hand, for the term $Y_3(t)$, we get 
\begin{align*}
\|D^{\alpha_1}(\sqrt{1+|\nabla_x\phi|^2 - \eps_t^2})\|_{L^\infty_{rdr}}&\sim \|D^{\alpha_1}\big(\frac{1}{r}\eps_r + \eps_r^2 - \eps_t^2\big)\|_{L^\infty_{rdr}}\\
&\lesssim \|D^{\alpha_1}\big(\frac{1}{rt}(\Gamma_1\eps - r\eps_t)\big)\|_{L^\infty_{rdr}} + \|D^{\alpha_1}\big(\eps_r^2 - \eps_t^2\big)\|_{L^\infty_{rdr}}\\
&\lesssim (K\kappa_0)\langle t\rangle^{\delta - 1} + (K\kappa_0)^2\langle t\rangle^{\delta - \frac{1}{2}}
\end{align*}
where we used the fact that $|\alpha_1|<|\alpha|$ under our current assumptions, as well as the radial Sobolev embedding $H^1_{r>1}\subset L^\infty$. 
Hence in the present case, we get the bound
\begin{align*}
&\int_0^t Y_3(s)(Y_1(s) + Y_2(s))\|D^{\alpha}\eps_t\|_{L^2_{r\,dr}}\,ds\\
&\lesssim (K\kappa_0)^2\int_0^t \langle s\rangle^{3\delta}[(\lambda - s)^{-2}+(K\kappa_0)(\lambda - s)^{-1}][\langle s\rangle^{\delta-1}+(K\kappa_0)\langle s\rangle^{\delta - \frac{1}{2}}]\,ds                        \ll (K\kappa_0)^2\la t\ra^{2\delta}
\end{align*}
provided $2\delta<\frac{1}{2}$ and $0\leq t\leq \lambda - C_1$ with $C_1\gg 1$. 
Finally, the case when $\Gamma_{1,2}$ is contained in $D^{\alpha_1}$ is handled similarly and omitted.
\\

{\it{Contribution of $A_{\alpha}^{3,4}$.}} These are handled like $A_{\alpha}^{2}$ in the case $|\alpha_3|<|\alpha|$.
\\

{\it{Contribution of the commutator term $[D^{\alpha}, \Box]$.}} Write $D^{\alpha} = D^{\alpha'}\Gamma_{1,2}$, $|\alpha'|\leq N-2$ with $D^{\alpha'} = \partial_t^{\alpha_1'}\partial_r^{\alpha_2'}$. Then we have
\[
[D^{\alpha'}\Gamma, \Box] = D^{\alpha'}[\Gamma, \Box] + [D^{\alpha'}, \Box]\Gamma
\]
and further
\[
[\Gamma_2, \Box] = -2\Box,\,[\Gamma_1, \Box] = \frac{1}{r^2}\Gamma_1,
\]
whence we have
\[
[D^{\alpha'}\Gamma_2, \Box] = -2D^{\alpha'}\Box + \sum_{1\leq |\beta|\leq |\alpha|-1}F_{\beta}(r)\partial_r^{\beta}\Gamma_2
\]
\[
[D^{\alpha'}\Gamma_1, \Box] = D^{\alpha'}\frac{1}{r^2}\Gamma_1+ \sum_{1\leq |\beta|\leq |\alpha|-1}F_{\beta}(r)\partial_r^{\beta}\Gamma_1
\]
where we have $F_{\beta}(r) = O(r^{-2-|\alpha'-\beta|})$.Then we need to estimate
\[
\int_0^t\int_0^\infty \big[-D^{\alpha'}\frac{1}{r^2}\Gamma_1+ \sum_{1\leq |\beta|\leq |\alpha|-1}F_{\beta}(r)\partial_r^{\beta}\Gamma_1\big]\eps(D^{\alpha'}\Gamma_1\eps)_{t}r\,drdt
\]
\[
\int_0^t\int_0^\infty\big[2D^{\alpha'}\Box + \sum_{1\leq |\beta|\leq |\alpha|-1}F_{\beta}(r)\partial_r^{\beta}\Gamma_2\big]\eps(D^{\alpha'}\Gamma_2\eps)_{t}r\,drdt
\]
We easily obtain ($\Gamma = \Gamma_{1,2}$)
\begin{align*}
&\big|\int_0^t\int_0^\infty\big[\sum_{1\leq |\beta|\leq |\alpha|-1}F_{\beta}(r)\partial_r^{\beta}\Gamma\big]\eps(D^{\alpha'}\Gamma\eps)_{t}r\,drdt\big|\\
&\lesssim \int_0^t (K\kappa_0)^2(\lambda - s)^{-2}\la s\ra^{2\delta}\,ds\ll (K\kappa_0)^2\la t\ra^{2\delta},\,0\leq t\leq \lambda - C_1,
\end{align*}
provided $C_1\gg 1$. Next, we estimate
\begin{align*}
&\big|\int_0^t\int_0^\infty\big[-D^{\alpha'}\frac{1}{r^2}\Gamma_1\eps\big](D^{\alpha'}\Gamma_1\eps)_{t}r\,drdt\big|\\
\end{align*}
Here we have to be careful to avoid a potential logarithmic loss. Thus write
\[
D^{\alpha'}\frac{1}{r^2}\Gamma_1\eps = \frac{1}{r^2}D^{\alpha'}\Gamma_1\eps + O(\frac{1}{r^3}\sum_{|\gamma|<|\alpha'|}|D^\gamma\Gamma_1\eps|)
\]
For the contribution of the error term, we have (recalling the constraints for the support of the integrand)
\begin{align*}
&\big|\int_0^t\int_0^\infty O(\frac{1}{r^3}\sum_{|\gamma|<|\alpha'|}|D^\gamma\Gamma_1\eps|)
(D^{\alpha'}\Gamma_1\eps)_{t}r\,drdt\big|\\
&\lesssim \int_0^t(\lambda - s)^{-2}\|\sum_{|\gamma|<|\alpha'|}|\frac{D^\gamma\Gamma_1\eps}{r}|\|_{L^2_{r\,dr}}\|(D^{\alpha'}\Gamma_1\eps)_{s}\|_{L^2_{r\,dr}}\,ds\\
&\lesssim  (K\kappa_0)^2\int_0^t(\lambda - s)^{-2}\la s\ra^{2\delta}\,ds\ll (K\kappa_0)^2\la t\ra^{2\delta},\,0\leq t\leq \lambda - C_1,\,C_1\gg 1.
\end{align*}
For the leading term above, we find
\begin{align*}
&\int_0^t\int_0^\infty\big[-\frac{1}{r^2}D^{\alpha'}\Gamma_1\eps\big](D^{\alpha'}\Gamma_1\eps)_{t}r\,drdt\big|\\
&=-\frac{1}{2}\int_0^\infty\big(\frac{D^{\alpha'}\Gamma_1\eps(t, \cdot)}{r}\big)^2 r\,dr + \frac{1}{2}\int_0^\infty\big(\frac{D^{\alpha'}\Gamma_1\eps(0, \cdot)}{r}\big)^2 r\,dr\\
&\leq \frac{1}{2}\int_0^\infty\big(\frac{D^{\alpha'}\Gamma_1\eps(0, \cdot)}{r}\big)^2 r\,dr
\end{align*}
and we have
\[
\frac{1}{2}\int_0^\infty\big(\frac{D^{\alpha'}\Gamma_1\eps(0, \cdot)}{r}\big)^2 r\,dr\ll (K\kappa_0)^2
\]
upon choosing $K$ suitably.
\\
Finally, the contribution of the term $-2D^{\alpha'}\Box$, i. e. the expression
\[
\int_0^t \int_0^\infty -2D^{\alpha'}\Box\eps \big(D^{\alpha'}\Gamma_2\eps\big)_t r\,drdt
\]
is handled precisely like the contributions of $A_\alpha^1$ - $A_\alpha^4$, using the equation of $\eps$. This concludes the bootstrap for the boosted energy bounds.
\\

{\bf{The dispersive estimate}}: Finally we improve the bound
\[
\sum_{1\leq\alpha\leq \frac{N}{2}+2}\|\langle t\rangle^{\frac{1}{2}}\partial_{t,r}^{\alpha}\eps(t, \cdot)\|_{L_{t,r}^\infty([0,T]\times \R_{+})}\leq K\kappa_0
\]
Due to the already bootstrapped energy bounds, we may assume $t\gg 1$.
Our point of departure is again \eqref{eq:bigmess}, with $|\alpha|\geq 1$. Denoting the right hand side by $F_\alpha(t, r)$, we pass to a one-dimensional formulation via $r^{\frac{1}{2}}D^{\alpha}\eps(t, r) =:\tilde{\eps}_\alpha(t, r)$.
Then we find that
\[
\tilde{\Box}\tilde{\eps}_{\alpha} + r^{\frac{1}{2}}[D^{\alpha}, \Box]\eps = r^{\frac{1}{2}}F_\alpha
\]
Here we have introduced $\tilde{\Box} = \partial_t^2 - \partial_r^2$. We can solve this problem by invoking the odd extension of all functions (with respect to $r$) to $(-\infty, \infty)$ and using the standard  d'Alembert's solution. We then obtain
\[
\tilde{\eps}_{\alpha} = \int_0^t\int_{|r-(t-\tilde{t})}^{r+(t-\tilde{t})}\big[- \mu^{\frac{1}{2}}[D^{\alpha}, \Box]\eps(\tilde{t}, \mu) + \mu^{\frac{1}{2}}F_\alpha(\tilde{t}, \mu)\,d\mu d\tilde{t}  + \tilde{\eps}_{\alpha, \text{free}}
\]
where $\tilde{\Box}\tilde{\eps}_{\alpha, \text{free}} = 0$, $\tilde{\eps}_{\alpha, \text{free}}[0] =  (\tilde{\eps}_\alpha(0, \cdot), \partial_t\tilde{\eps}_\alpha(0, \cdot))$. Then the bound
\[
\|\tilde{\eps}_{\alpha, \text{free}}(t, \cdot)\|_{L^\infty}\ll K\kappa_0\la t\ra^{-\frac{1}{2}}
\]
follows from easily from the d'Alembert parametrix, and
we thus need to show that under our assumptions, we have the bound
\[
r^{-\frac{1}{2}}\int_0^t\int_{|r-(t-\tilde{t})|}^{r+(t-\tilde{t})}\big[-\mu^{\frac{1}{2}}[D^{\alpha}, \Box]\eps(\tilde{t}, \mu) + \mu^{\frac{1}{2}}F_\alpha(\tilde{t}, \mu)\,d\mu d\tilde{t}\big] \ll \la t\ra^{-\frac{1}{2}}K\kappa_0
\]
We again treat the various ingredients forming $F_\alpha$ and the first term in the integrand separately.
\\

{\it{(i): The contribution of the term $-\mu^{\frac{1}{2}}[D^{\alpha}, \Box]\eps(\tilde{t}, \mu)$.}}
We write $[D^{\alpha}, \Box]\eps(\tilde{t}, \mu) = \sum_{1\leq|\beta|\leq |\alpha|}F_\beta(\mu)\partial^\beta\eps(\tilde{t}, \mu)$ where $F_\beta(\mu) = O(\frac{1}{\mu^{2+|\alpha|-|\beta|}})$.
Then we distinguish between the following cases:
\\

{\it{(i1): $\tilde{t}\ll t$, $r\ll t$.}} Note that then $|r\pm(t-\tilde{t})|\sim t$, and due to Huyghen's principle, we also get $t\gg 1$ on the support of $\eps$. Then we estimate
\begin{align*}
&r^{-\frac{1}{2}}\big|\int_0^t\int_{|r-(t-\tilde{t})|}^{r+(t-\tilde{t})}\chi_{\tilde{t}\ll t}\sum_{1\leq|\beta|\leq |\alpha|}\mu^{\frac{1}{2}}F_\beta(\mu)\partial^\beta\eps(\tilde{t}, \mu)\,d\mu d\tilde{t}\big|\\
&\lesssim \sum_{1\leq|\beta|\leq |\alpha|}\int_0^t \chi_{\tilde{t}\ll t}\la t\ra^{-2}\|\partial^\beta\eps(\tilde{t}, \cdot)\|_{L^2_{\mu d\mu}}\,d\tilde{t}\lesssim K\kappa_0\la t\ra^{-1+\delta}\ll K\kappa_0\la t\ra^{-\frac{1}{2}}
\end{align*}
We have used here the Cauchy-Schwarz inequality with respect to the $\mu$-integral.
\\

{\it{(i2): $\tilde{t}\ll t$, $r\gtrsim t$.}} Here the $\la t\ra^{-\frac{1}{2}}$--decay comes from the $r^{-\frac{1}{2}}$-factor outside. Estimate
\begin{align*}
&r^{-\frac{1}{2}}\big|\int_0^t\int_{|r-(t-\tilde{t})|}^{r+(t-\tilde{t})}\chi_{\tilde{t}\ll t}\sum_{1\leq|\beta|\leq |\alpha}\mu^{\frac{1}{2}}F_\beta(\mu)\partial^\beta\eps(\tilde{t}, \mu)\,d\mu d\tilde{t}\big|\\
&\lesssim \la t\ra^{-\frac{1}{2}}\sum_{1\leq|\beta|\leq |\alpha}\int_0^t \chi_{\tilde{t}\ll t}(\lambda-\tilde{t})^{-\frac{3}{2}}\|\partial^\beta\eps(\tilde{t},\cdot)\|_{L^2_{\mu d\mu}}\,d\tilde{t}
\end{align*}
In the last inequality, we have used that $\mu\geq \lambda - \tilde{t}$ on the support of $\eps(\tilde{t}, \mu)$. Since $\lambda - \tilde{t}\gg \tilde{t}$ for $\tilde{t}\ll t$, we can bound the last expression by
\[
\lesssim  \la t\ra^{-\frac{1}{2}}\int_0^t \chi_{\tilde{t}\ll t}K\kappa_0(\lambda-\tilde{t})^{-\frac{3}{2}+\delta}\,d\tilde{t}
\ll K\kappa_0\la t\ra^{-\frac{1}{2}},
\]
 as desired.
 \\

 {\it{(i3): $\tilde{t}\gtrsim t$.}} in this case, we have to exploit control over the vector fields $\Gamma_{1,2}\eps$. Simply write (for $|\beta|\geq 1$)
 \[
 \partial^{\beta}\eps = \partial^{\gamma}\partial_{t,r}\eps = \partial^{\gamma}\big(\frac{1}{t}[\Gamma_{2,1}\eps - r\eps_{r,t}]\big)
 \]
whence
\[
|\mu^{\frac{1}{2}}F_\beta(\mu)\partial^{\beta}\eps|\lesssim \mu^{-2}\big(\frac{1}{t}\sum_{|\tilde{\gamma}|\leq |\gamma|}\mu^{\frac{1}{2}}|\partial^{\tilde{\gamma}}\Gamma_{1,2}\eps| + \frac{\mu}{t}\sum_{1\leq |\tilde{\gamma}|\leq |\beta|}\mu^{\frac{1}{2}}|\partial^{\tilde{\gamma}}\eps|\big)
\]
One then estimates
\begin{align*}
&r^{-\frac{1}{2}}\big|\int_0^t\int_{|r-(t-\tilde{t})|}^{r+(t-\tilde{t})}\chi_{\tilde{t}\gtrsim t}\sum_{1\leq|\beta|\leq |\alpha|}\mu^{\frac{1}{2}}F_\beta(\mu)\partial^\beta\eps(\tilde{t}, \mu)\,d\mu d\tilde{t}\big|\\
&\lesssim t^{-1}\sum_{|\tilde{\gamma}|\leq |\beta|-1}\int_0^t \chi_{\tilde{t}\gtrsim t}(\lambda - \tilde{t})^{-1}\|\frac{\partial^{\tilde{\gamma}}\Gamma_{1,2}\eps(\tilde{t},\cdot)}{\mu}\|_{L^2_{\mu d\mu}}\,d\tilde{t}\\
&+t^{-1}\sum_{1\leq|\tilde{\gamma}|\leq |\beta|}\int_0^t \chi_{\tilde{t}\gtrsim t}(\lambda - \tilde{t})^{-1}\|\partial^{\tilde{\gamma}}\eps(\tilde{t},\cdot)\|_{L^2_{\mu d\mu}}\,d\tilde{t}\\
&\ll(K\kappa_0)\la \log t\ra t^{-1+\delta}\lesssim (K\kappa_0) t^{-\frac{1}{2}}
\end{align*}
Here we have again invoked the Cauchy-Schwarz inequality with respect to the $\mu$-integration, as well as a simple version of Hardy's inequality as well as the already bootstrapped energy bounds.
\\
This concludes the case (i).
\\

Next, we distinguish between the contribution of the null-form, \eqref{eq:ennullform}, and the remaining terms in $F_\alpha(\tilde{t}, \mu)$, which are treated like the term (i).

{\it{(ii): The contribution  of the source terms $F_\alpha(\tilde{t}, \mu)$; the null-form.}} \\
We use the following identity for the null-form:
\begin{equation}\label{eq:nullform}
\partial_t f \partial_t g - \partial_r f \partial_r g = \frac{\Gamma_2f\Gamma_2g - \Gamma_1f\Gamma_1g}{t^2 - r^2}
\end{equation}
Thus we write
\begin{align}
&-\eps_t\partial_t(\frac{1}{\sqrt{1+|\nabla_x\phi|^2 - \eps_t^2}} - \frac{1}{\sqrt{1+|\nabla_xQ|^2}})+\sum_{i=1,2}\eps_{x_i}\partial_{x_i}(\frac{1}{\sqrt{1+|\nabla_x\phi|^2 - \eps_t^2}} - \frac{1}{\sqrt{1+|\nabla_xQ|^2}})\label{eq:fullmess}\\
&=\frac{\sum_{j=1,2}(-1)^{j+1}\Gamma_j\eps \Gamma_j(\frac{1}{\sqrt{1+|\nabla_x\phi|^2 - \eps_t^2}} - \frac{1}{\sqrt{1+|\nabla_xQ|^2}})}{t^2 - r^2}\nonumber
\end{align}
Note that
\begin{align*}
&D^{\alpha}\Gamma_2(\frac{1}{\sqrt{1+|\nabla_x\phi|^2 - \eps_t^2}} - \frac{1}{\sqrt{1+|\nabla_xQ|^2}})\\& = O(\frac{1}{r})\sum_{|\beta|\leq |\alpha|+1}|D^{\beta}\eps_r| + O(\sum_{|\beta|\leq |\alpha|}|D^{\beta}\Gamma_2\eps_{t,r}|\sum_{|\beta|\leq |\alpha|}|D^{\beta}\eps_{t,r}|)
\end{align*}
while also (in the first sum the operator $D^{\beta}$ may involve at most one operator $\Gamma_{1,2}$ and none in the second and third)
\begin{align*}
&D^{\alpha}\Gamma_1(\frac{1}{\sqrt{1+|\nabla_x\phi|^2 - \eps_t^2}} - \frac{1}{\sqrt{1+|\nabla_xQ|^2}})\\& =
O(\frac{1}{r})\sum_{|\beta|\leq |\alpha|+2}r^{-s(\beta)}|D^{\beta}\eps|+ O(\sum_{|\beta|\leq |\alpha|}|D^{\beta}\Gamma_1\eps_{t,r}|\sum_{|\beta|\leq |\alpha|}|D^{\beta}\eps_{t,r}|)
\end{align*}
where we have $s(\beta) = 1$ if $D^{\beta} = \Gamma_{1,2}$ and $s(\beta) = 0$ otherwise. Introduce the quantity
\[
G_\alpha =  O(\frac{1}{r})\sum_{|\beta|\leq |\alpha|+1}|D^{\beta}\eps_r| + O(\frac{1}{r})\sum_{|\beta|\leq |\alpha|+2}r^{-s(\beta)}|D^{\beta}\eps|
\]
{\it{(iia): bounding the integral}}
\[
r^{-\frac{1}{2}}\sum_{\alpha_1+\alpha_2=\alpha}\big|\int_0^t\int_{|r-(t-\tilde{t})|}^{r+(t-\tilde{t})}\mu^{\frac{1}{2}}\frac{|D^{\alpha_1}\Gamma\eps|\, G_{\alpha_2}}{\tilde{t}^2 - \mu^2}\,d\mu d\tilde{t}\big|,\,\Gamma = \Gamma_{1,2}
\]
We distinguish between the following cases:
\\
{\it{(iia.1): $\tilde{t}\ll t$.}} Here we have $|\mu^2 - \tilde{t}^2|\gtrsim t^2$, whence for $|\alpha|\leq \frac{N}{2}+2$, $\alpha = \alpha_1+\alpha_2$, we get
\begin{align*}
&r^{-\frac{1}{2}}\big|\int_0^t\int_{|r-(t-\tilde{t})|}^{r+(t-\tilde{t})}\chi_{\tilde{t}\ll t}\mu^{\frac{1}{2}}\frac{|D^{\alpha_1}\Gamma\eps|\, G_{\alpha_2}}{-\tilde{t}^2 + \mu^2}\,d\mu d\tilde{t}\big|\\&\lesssim \la t\ra^{-1} \frac{\la\log t\ra^{\frac{1}{2}}}{\la t\ra}\int_0^t\|\frac{D^{\alpha_1}\Gamma\eps(\tilde{t}, \cdot)}{\la \log \mu\ra^{\frac{1}{2}}}\|_{L^\infty_{\mu d\mu}}\|G_{\alpha_2}(\tilde{t}, \cdot)\|_{L^2_{\mu d\mu}}\,d\tilde{t}\\
&\lesssim (K\kappa_0)^2\la t\ra^{-1+2\delta}\la \log t\ra^{\frac{1}{2}}\ll (K\kappa_0)\la t\ra^{-\frac{1}{2}},\,\text{provided}\,\,2\delta<\frac{1}{2}
\end{align*}
Here we have used Cauchy-Schwarz with respect to $\mu$ and the bound
\[
\|\frac{D^{\alpha_1}\Gamma\eps(\tilde{t}, \cdot)}{\la\log\mu\ra^{\frac{1}{2}}}\|_{L^\infty_{\mu d\mu}}\lesssim \|D^{\alpha_1}\Gamma\eps(\tilde{t}, \cdot)\|_{\dot{H}^1_{\mu\,d\mu}}
\]
which comes from the support properties of $\eps$ and our current restrictions on $r, t$.
\\

{\it{(iia.2): $\tilde{t}\gtrsim t$.}} We observe that if further $|\tilde{t}^2 - \mu^2|\sim \tilde{t}^2\sim t^2$, one argues exactly as in case (iia.1). If $|\tilde{t}^2 - \mu^2|\ll \tilde{t}^2$, then $\mu\sim \tilde{t}\sim t$. Thus if we further restrict to $|\tilde{t}^2 - \mu^2|\geq \tilde{t}^{1-\delta_1}$, we get, using
\[
\|G_{\alpha_2}\|_{L^2_{\mu\,d\mu}}\lesssim (K\kappa_0)t^{\delta-1}
\]
and introducing a cutoff $\chi^2$ to implement the above restrictions on $\mu, \tilde{t}$
\begin{align*}
&r^{-\frac{1}{2}}\big|\int_0^t\int_{|r-(t-\tilde{t})|}^{r+(t-\tilde{t})}\chi^2\mu^{\frac{1}{2}}\frac{|D^{\alpha_1}\Gamma\eps|\, G_{\alpha_2}}{-\tilde{t}^2 + \mu^2}\,d\mu d\tilde{t}\big|\\&
\lesssim (K\kappa_0)^2t^{-(2-2\delta-\delta_1)}\int_0^t\la \log t\ra^{\frac{1}{2}}d\tilde{t}\ll (K\kappa_0)\la \log t\ra^{\frac{1}{2}}t^{-(1-2\delta - \delta_1)}\lesssim (K\kappa_0)t^{-\frac{1}{2}}
\end{align*}
provided $2\delta + \delta_1<\frac{1}{2}$.\\
Thus we may further restrict to $|\tilde{t}^2 - \mu^2|<\tilde{t}^{1-\delta_1}, \tilde{t}\gtrsim t$, which we do via a multiplier $\chi^3$.
Introduce the quantity
\begin{align*}
H_{\alpha}: &= D^{\alpha}\partial_{t,r}\big(\frac{1}{\sqrt{1+|\nabla_x\phi|^2 - \eps_t^2}} - \frac{1}{\sqrt{1+|\nabla_x Q|^2}}\big)\\
& = O(\frac{1}{\mu})\sum_{|\beta|\leq |\alpha|+1}D^{\beta}\eps_\mu + O([\sum_{|\beta|\leq \frac{|\alpha|}{2}+1}|D^{\beta}\eps_{t,\mu}|][\sum_{|\beta|\leq |\alpha|+1}|D^{\beta}_{\eps_{t,\mu}}|])
\end{align*}
Undoing the null-form expansion as on the right hand side of \eqref{eq:nullform}, i. e. writing things as on the left hand side of \eqref{eq:fullmess}, we find
\begin{align*}
&r^{-\frac{1}{2}}\int_0^t\int_{|r-(t-\tilde{t})|}^{r+(t-\tilde{t})}\chi^3 \mu^{\frac{1}{2}}|D^{\alpha_1}\eps_{t,\mu}|H_{\alpha_2}\,d\mu d\tilde{t}\\
&\lesssim r^{-\frac{1}{2}}\int_0^t\int_{|r-(t-\tilde{t})|}^{r+(t-\tilde{t})}\chi^3\mu^{\frac{1}{2}}|D^{\alpha_1}\eps_{t,\mu}|O(\frac{1}{\mu})\sum_{|\beta|\leq |\alpha_2|+1}|D^\beta\eps_\mu|\,d\mu d\tilde{t}\\
&+ r^{-\frac{1}{2}}\int_0^t\int_{|r-(t-\tilde{t})|}^{r+(t-\tilde{t})}\chi^3\mu^{\frac{1}{2}}|D^{\alpha_1}\eps_{t,\mu}|[\sum_{|\beta|\leq |\frac{\alpha_2|}{2}+1}|D^\beta\eps_{t,\mu}|][\sum_{|\beta|\leq |\alpha_2|+1}|D^\beta\eps_{t,\mu}|]\,d\mu d\tilde{t}\\
\end{align*}
Using the Cauchy-Schwarz inequality with respect to $\mu$ and assuming (as we may since $|\alpha_1|+|\alpha_2|\leq \frac{N}{2}+2$) $|\alpha_1|<\frac{N}{2}+2$, the first term is bounded by
\begin{align*}
&\lesssim \int_0^t \tilde{t}^{-\frac{\delta_1}{2}}(\lambda - \tilde{t})^{-1}\|\chi^3D^{\alpha_1}\eps_{t,\mu}\|_{L^\infty_{\mu\,d\mu}}\|\sum_{|\beta|\leq |\alpha_2|+1}|D^\beta\eps_\mu|\|_{L^2_{\mu\,d\mu}}\,d\tilde{t}\\
&\ll (K\kappa_0) \la t\ra^{-\frac{1}{2}}
\end{align*}
provided $\delta_1>2\delta$.
\\
To bound the second integral above, we have to use a different observation, namely that the restrictions $\mu\in [r+(t-\tilde{t}), |r-(t-\tilde{t})|]$, $|\tilde{t} - \mu|<\tilde{t}^{-\delta_1}$ imply that $\tilde{t}$ ranges over an interval of length $\sim r$. Then we find
\begin{align*}
&r^{-\frac{1}{2}}\int_0^t\int_{|r-(t-\tilde{t})|}^{r+(t-\tilde{t})}\chi^3\mu^{\frac{1}{2}}|D^{\alpha_1}\eps_{t,\mu}|[\sum_{|\beta|\leq |\frac{\alpha_2|}{2}+1}|D^\beta\eps_\mu|][\sum_{|\beta|\leq |\alpha_2|+1}|D^\beta\eps_\mu|]\,d\mu d\tilde{t}\\
&\lesssim (K\kappa_0)^3r^{\frac{1}{2}}\la t\ra^{\delta - \frac{\delta_1}{2}-1}\ll (K\kappa_0)\la t\ra^{-\frac{1}{2}}
\end{align*}
where we have estimated the factors $|D^{\alpha_1}\eps_{t,\mu}|, \sum_{|\beta|\leq |\frac{\alpha_2|}{2}+1}|D^\beta\eps_\mu|$ in $L^\infty_{\mu d\mu}$ and used Cauchy-Schwarz for the $\mu$-integral. 
This concludes case (ii.a).
\\
{\it{(iib): bounding the integral}}
\[
r^{-\frac{1}{2}}\sum_{\alpha_1+\alpha_2=\alpha}\big|\int_0^t\int_{|r-(t-\tilde{t})|}^{r+(t-\tilde{t})}\mu^{\frac{1}{2}}\frac{|D^{\alpha_1}\Gamma\eps|\, I_{\alpha_2}}{\tilde{t}^2 - \mu^2}\,d\mu d\tilde{t}\big|
\]
where
\[
I_\alpha = O(\sum_{|\beta|\leq |\alpha|}|D^{\beta}\Gamma_{1,2}\eps_{t,r}|\sum_{|\beta|\leq |\alpha|}|D^{\beta}\eps_{t,r}|)
\] 
and we have again reverted to writing things as on the right hand side in \eqref{eq:nullform}. 
By what was shown in (iia.2), (iia.2), it follows that it suffices to analyze the analogues (iib.1) - (iib.2) with the latter case only for $|\tilde{t}^2 - \mu^2|\geq \tilde{t}^{1-\delta_1}$.
\\
{\it{(iib.1): $\tilde{t}\ll t$.}} Again $\mu\gtrsim t$ on the support of the integrand. We estimate this (using Cauchy-Schwarz with respect to $\mu$) by
\[
\lesssim t^{-2}\la \log t\ra^{\frac{1}{2}}\int_0^t \|\frac{D^{\alpha_1}\Gamma\eps}{\la\log \mu\ra^{\frac{1}{2}}}\|_{L^\infty_{\mu\,d\mu}}\|I_{\alpha_2}\|_{L^2_{\mu\,d\mu}}\,d\tilde{t},\,\alpha_1+\alpha_2\leq \frac{N}{2}+2.
\]
Then we use
\[
\|\frac{D^{\alpha_1}\Gamma\eps}{\la\log \mu\ra^{\frac{1}{2}}}\|_{L^\infty_{\mu\,d\mu}}\lesssim \|D^{\alpha_1}\Gamma\eps\|_{\dot{H}^1_{\mu\,d\mu}}\lesssim (K\kappa_0)\la \tilde{t}\ra^{\delta}
\]
\[
\|I_{\alpha_2}\|_{L^2_{\mu\,d\mu}}\lesssim (K\kappa_0)^2 \la \tilde{t}\ra^{2\delta}
\]
and so we can bound
\[
t^{-2}\la \log t\ra^{\frac{1}{2}}\int_0^t \|D^{\alpha_1}\Gamma\eps\|_{L^\infty_{\mu\,d\mu}}\|I_{\alpha_2}\|_{L^2_{\mu\,d\mu}}\,d\tilde{t}\lesssim t^{-2}\la\log t\ra^{\frac{1}{2}}(K\kappa_0)^3\int_0^t\tilde{t}^{3\delta}\,d\tilde{t}\ll (K\kappa_0)t^{-\frac{1}{2}}
\]
provided $\delta$ is chosen small enough ($3\delta<\frac{1}{2}$).
\\

{\it{(iib.2): $\tilde{t}\gtrsim t$, $|\tilde{t}^2 - \mu^2|\geq \tilde{t}^{1-\delta_1}$.}} In case $\mu\ll \tilde{t}$, we can estimate this contribution just like in the preceding case (iib.1). Thus assume now $\mu\gtrsim \tilde{t}\gtrsim t$.
Due to Huyghen's principle, it suffices to restrict the integrand to the backward light cone centered at $(r, t)$. It then follows that we get the bound (on that portion of the integrand)
\[
|\mu^{\frac{1}{2}}D^{\alpha_1}\Gamma\eps|\lesssim r^{\frac{1}{2}}\|\partial_\mu D^{\alpha_1}\Gamma\eps\|_{L^2_{\mu\,d\mu}}
\]
In fact, we can write for $D^{\alpha_1}\Gamma\eps$ with arguments $(\tilde{t}, \mu)$ in the backward light cone centered at $(r, t)$ 
\[
\mu^{\frac{1}{2}}D^{\alpha_1}\Gamma\eps = \int_{\max\{\lambda-\tilde{t}, r-(t-\tilde{t}\}}^{\mu}\partial_{\mu}\big(\mu^{\frac{1}{2}}D^{\alpha_1}\Gamma\eps\big)\,d\mu
\]
and from Cauchy-Schwarz
\[
|D^{\alpha_1}\Gamma\eps|\leq r^{\frac{1}{2}}\|\partial_{\mu}D^{\alpha_1}\Gamma\eps\|_{L^2_{\mu d\mu}}
\]
Thus we infer the bound\footnote{The extra $\tilde{t}^{-1}$ comes from the weight in $\|\cdot\|_{L^1_{\mu\,d\mu}}$.} (here $\chi^4$ localizes to the region specified above)
\begin{align*}
&r^{-\frac{1}{2}}\int_0^t\int_{|r-(t-\tilde{t})|}^{r+(t-\tilde{t})}\chi^4\frac{|\mu^{\frac{1}{2}}D^{\alpha_1}\Gamma\eps|I_{\alpha_2}}{|\tilde{t}^2 - \mu^2|}\,d\mu d\tilde{t}\\
&\lesssim  \int_0^t \chi_{\tilde{t}\gtrsim t}\tilde{t}^{\delta_1 - 2}\|\partial_\mu D^{\alpha_1}\Gamma\eps\|_{L^2_{\mu\,d\mu}}\|I_{\alpha_2}\|_{L^1_{\mu\,d\mu}}\,d\tilde{t}\\
&\lesssim  (K\kappa_0)^3\int_0^t \chi_{\tilde{t}\gtrsim t} \tilde{t}^{\delta_1 + 3\delta - 2}\,d\tilde{t}
\ll (K\kappa_0)t^{-\frac{1}{2}}
\end{align*}
provided $\delta_1 + 3\delta<\frac{1}{2}$. This completes case (ii.b) and hence the contribution of the null-form.
\\

{\it{(iii): The contribution  of the source terms $F_\alpha(\tilde{t}, \mu)$; remaining terms.}} Here we explain how to deal with the two terms of \eqref{eq:enerror1}, the remaining ones being treated similarly.
\\
{\it{(iiia)}} We commence by bounding the expression ($|\alpha_2| + |\alpha_3|\leq \frac{N}{2}+2$)
\begin{align*}
r^{-\frac{1}{2}}\int_0^t\int_{|r-(t-\tilde{t})|}^{r+(t-\tilde{t})} \mu^{\frac{1}{2}}D^{\alpha_2}\big(O(\frac{1}{\mu^2})\big) D^{\alpha_3}\big(\frac{\eps_{\mu\mu} + O(\frac{1}{\mu})\eps_\mu}{(1+|\nabla_x\phi|^2 - \eps_t^2)^{\frac{3}{2}}}\big)\,d\mu d\tilde{t}
\end{align*}
We distinguish between the following cases:
\\

{\it{(iiia.1): $\tilde{t}\ll t$}}. In this case, we have $\mu\gtrsim t$ on the support of the integrand, and so we can bound this contribution by
\[
\lesssim t^{-1}\int_0^t (\lambda  - \tilde{t})^{-1}\|\sum_{|\beta|\leq |\alpha_3|+2}|D^{\beta}\eps|\|_{L^2_{\mu\,d\mu}}\,d\tilde{t}\ll (K\kappa_0)t^{-\frac{1}{2}},\,0\leq t\leq \lambda - C_1
\]
 if we use the already bootstrapped energy bounds.
\\

{\it{(iiia.2): $\tilde{t}\gtrsim t$}}. Here we write
\[
\eps_{\mu\mu} = \frac{1}{\tilde{t}}[\Gamma_1\eps - \mu\eps_{\tilde{t}}]_{\mu}
\]
Then we get (for $|\alpha_2|+|\alpha_3|\leq \frac{N}{2}+2$)
\begin{align*}
&r^{-\frac{1}{2}}\big|\int_0^t\int_{|r-(t-\tilde{t})|}^{r+(t-\tilde{t})} \chi_{\tilde{t}\gtrsim t}\mu^{\frac{1}{2}}D^{\alpha_2}\big(O(\frac{1}{\mu^2})\big) D^{\alpha_3}\big(\frac{\eps_{\mu\mu} + O(\frac{1}{\mu})\eps_\mu}{(1+|\nabla_x\phi|^2 - \eps_t^2)^{\frac{3}{2}}}\big)\,d\mu d\tilde{t}\big|\\
&\lesssim t^{-1}\int_0^t (\lambda  - \tilde{t})^{-2}\big[\sum_{1\leq|\beta|\leq |\alpha_3|+1}\|D^{\beta}\Gamma_1\eps\|_{L^2_{\mu\,d\mu}}+(\lambda - \tilde{t})\sum_{1\leq|\beta|\leq |\alpha_3|+2}\|D^{\beta}\eps\|_{L^2_{\mu\,d\mu}}\big]\,d\tilde{t}\\
&+t^{-1}\int_0^t (\lambda  - \tilde{t})^{-2}\big[\sum_{|\beta|\leq |\alpha_3|}\|\frac{D^{\beta}\Gamma_1\eps}{\mu}\|_{L^2_{\mu\,d\mu}}+(\lambda - \tilde{t})\sum_{|\beta|\leq |\alpha_3|+1}\|\frac{D^{\beta}\eps}{\mu}\|_{L^2_{\mu\,d\mu}}\big]\,d\tilde{t}\\
&\ll (K\kappa_0)t^{-1}\la\log t\ra t^{\delta}\lesssim (K\kappa_0)t^{-\frac{1}{2}},\,0\leq t\leq \lambda - C_1,
\end{align*}
where in the last step we use the already bootstrapped energy bounds.
\\

{\it{(iiib)}} Finally, we also treat the contribution of the term
\begin{align*}
r^{-\frac{1}{2}}\int_0^t\int_{|r-(t-\tilde{t})|}^{r+(t-\tilde{t})} \mu^{\frac{1}{2}}D^{\alpha_2}\big(O(\frac{1}{\mu})\big)D^{\alpha_3}\big(\frac{\eps_\mu\eps_{\mu\mu} - \eps_t\eps_{t\mu}}{(1+|\nabla_x\phi|^2 - \eps_t^2)^{\frac{3}{2}}}\big)\,d\mu d\tilde{t}
\end{align*}
We use the same case distinction as above:
\\
{\it{(iiib.1): $\tilde{t}\ll t$.}} We have $\mu\gtrsim t$. Here again we have to use the null-structure \eqref{eq:nullform}, i. e. write
\[
\big|D^{\alpha_3}\big(\frac{\eps_\mu\eps_{\mu\mu} - \eps_t\eps_{t\mu}}{(1+|\nabla_x\phi|^2 - \eps_t^2)^{\frac{3}{2}}}\big)\big|\lesssim \sum_{|\beta_1|+|\beta_2|\leq |\alpha_3|}\frac{D^{\beta_1}\Gamma \eps D^{\beta_2}\Gamma\eps_{\mu}}{|\mu^2 - \tilde{t}^2|}
\]
Then we can bound the above integral expression by
\begin{align*}
&\lesssim t^{-3}\sum_{|\beta_1|+|\beta_2|\leq \frac{N}{2}+2}\int_0^t \|D^{\beta_1}\Gamma \eps\|_{L^\infty_{\mu\,d\mu}}\|D^{\beta_2}\Gamma\eps_{\mu}\|_{L^2_{\mu\,d\mu}}\,d\tilde{t}\lesssim (K\kappa_0)^2 t^{-2+2\delta}\la \log t\ra^{\frac{1}{2}}\\&\ll (K\kappa_0)t^{-\frac{1}{2}}
\end{align*}
{\it{(iiib.2): $\tilde{t}\gtrsim t$.}} If we further restrict to $|\mu^2 - \tilde{t}^2|\gtrsim \tilde{t}^2$, we can argue exactly as before (one only gains $t^{-1+2\delta}$, which is enough). Thus assume now $|\mu^2 - \tilde{t}^2|\ll\tilde{t}^2$, whence $\mu\sim t$. We further distinguish between $|\mu^2 - \tilde{t}^2|\geq \tilde{t}^{1-\delta_1}$ and $|\mu^2 - \tilde{t}^2|<\tilde{t}^{1-\delta_1}$
In the former case, one infers for the integral above the bound
\[
\lesssim (K\kappa_0)^2 t^{-1+2\delta+\delta_1}\ll (K\kappa_0) t^{-\frac{1}{2}}
\]
provided $\delta, \delta_1$ are small enough. On the other hand, if we restrict to $\tilde{t}\gtrsim t,  |\mu^2 - \tilde{t}^2|<\tilde{t}^{1-\delta_1}$ via a cutoff $\chi^3$, we obtain without using the null-structure (and restricting to $|\alpha_2|+|\alpha_3|\leq \frac{N}{2}+2$)
\begin{align*}
&r^{-\frac{1}{2}}\big|\int_0^t\int_{|r-(t-\tilde{t})|}^{r+(t-\tilde{t})} \chi^3\mu^{\frac{1}{2}}D^{\alpha_2}\big(O(\frac{1}{\mu})\big)D^{\alpha_3}\big(\frac{\eps_\mu\eps_{\mu\mu} - \eps_t\eps_{t\mu}}{(1+|\nabla_x\phi|^2 - \eps_t^2)^{\frac{3}{2}}}\big)\,d\mu d\tilde{t}\big|\\
&\lesssim t^{-1}\int_0^t \tilde{t}^{-\frac{\delta_1}{2}}[\sum_{|\beta|\leq \frac{N}{2}}\|D^{\beta}\eps_{\mu,t}\|_{L^\infty_{\mu\,d\mu}}]\sum_{|\beta|\leq |\alpha_3|+1}\|D^{\beta}\eps_{\mu,t}\|_{L^2_{\mu\,d\mu}}]\,d\tilde{t}\\
&\lesssim (K\kappa_0)^2t^{-1}\int_0^t\tilde{t}^{\delta-\frac{\delta_1}{2}-\frac{1}{2}}\,d\tilde{t}\ll (K\kappa_0)t^{-\frac{1}{2}}
\end{align*}
provided $\delta_1>2\delta$; we have used Cauchy-Schwarz with respect to $\mu$. This completes estimating the contribution of the second term of \eqref{eq:enerror1}, and the remaining contributions from \eqref{eq:enerror2}, \eqref{eq:enerror3} are handled similarly.

\end{proof}

\end{proof}

\end{document}